\renewcommand{\epsilon}{\varepsilon}
\renewcommand{\theta}[0]{\vartheta}
\renewcommand{\phi}[0]{\varphi}
\newcommand{\Hc}[0]{\mathcal{H}}
\DeclareMathOperator{\GL}{GL}
\DeclareMathOperator{\End}{End}
\DeclareMathOperator{\Aut}{Aut}
\DeclareMathOperator{\Hol}{Hol}
\DeclareMathOperator{\NHol}{NHol}
\DeclareMathOperator{\res}{res}
\newtheorem{dummy}{Dummy}
\numberwithin{dummy}{section}
\numberwithin{figure}{section}
\newtheorem{theorem}[dummy]{Theorem}
\newtheorem{proposition}[dummy]{Proposition}
\newtheorem{prop}[dummy]{Proposition}
\newtheorem{corollary}[dummy]{Corollary} 
\theoremstyle{definition}
\newtheorem{definition}[dummy]{Definition}
\newtheorem*{notation}{Notation}
\newtheorem{example}[dummy]{Example}
\theoremstyle{remark}
\newtheorem{remark}[dummy]{Remark}
\def\imod#1{\allowbreak\mkern10mu({\operator@font mod}\,\,#1)}
\numberwithin{equation}{section}
\begin{document}

\date{1 Dec 2022, 08:30 CEST --- Version 1.03%
}

\title[Finite $p$-groups with a large multiple holomorph]
      {Finite $p$-groups of class two with\\
        a large multiple holomorph}

\author{A.~Caranti}

\address[A.~Caranti]%
 {Dipartimento di Matematica\\
  Universit\`a degli Studi di Trento\\
  via Sommarive 14\\
  I-38123 Trento\\
  Italy\\\endgraf
  ORCiD: 0000-0002-5746-9294} 

\email{andrea.caranti@unitn.it} 

\urladdr{https://caranti.maths.unitn.it/}

\author{Cindy (Sin Yi) Tsang}

\address[Cindy Tsang]%
{Department of Mathematics, Ochanomizu University, 2-1-1 Otsuka, Bunkyo-ku, Tokyo,
Japan
\\\endgraf
  ORCiD: 0000-0003-1240-8102}
\email{tsang.sin.yi@ocha.ac.jp}

\urladdr{http://sites.google.com/site/cindysinyitsang/}

\subjclass[2010]{20B35 20D15 20D45}

\keywords{Holomorph, multiple holomorph, regular subgroups, finite
  $p$-groups, automorphisms, skew braces} 

\begin{abstract}
Let $G$ be any group. The quotient group $T(G)$ of the multiple holomorph by the holomorph of $G$ has been investigated for various families of groups $G$. 
In this paper, we shall take $G$ to be a finite $p$-group of class two
for any odd prime $p$, in which case $T(G)$ may be studied using
certain bilinear forms. For any $n\geq 4$, we exhibit examples of $G$
of order $p^{n+{n\choose 2}}$ such that $T(G)$ contains a subgroup
isomorphic to
\[ \GL_n(\mathbb{F}_p) \times \GL_{\binom{n}{2}-n}(\mathbb{F}_p).\]
For finite $p$-groups $G$, the prime factors of the order of $T(G)$ which were known so far all came from $p(p-1)$. Our examples show that the order of $T(G)$ can have other prime factors as well. In fact, we can embed any finite group into $T(G)$ for a suitable choice of $G$.
\end{abstract}

\thanks{The first-named author is a member of GNSAGA--INdAM, Italy.\\
The second-named author is supported by JSPS KAKENHI (Grant-in-Aid for Research Activity Start-up) Grant Number 21K20319.}

\maketitle

\thispagestyle{empty}

\section{Introduction}

Let $G$ be any group, and write $S(G)$ for the group of permutations on the set $G$, where we compose maps from left to right. Consider the right regular representation $\rho : G\rightarrow S(G)$ defined by
\[ x^{\rho(y)} = xy\mbox{ for all }x,y\in G.\]
Let $N_{S(G)}(\cdot)$ denote the normalizer operation in $S(G)$, and put
\[ \Hol(G) = N_{S(G)}(\rho(G)),\,\ \NHol(G) = N_{S(G)}(N_{S(G)}(\rho(G))), \]
which are called the \emph{holomorph} and \emph{multiple holomorph} of $G$, respectively. It is well-known that isomorphic regular subgroups of $S(G)$ are conjugates of each other, and since
\[ N_{S(G)}(\theta^{-1}\rho(G) \theta) = \theta^{-1} N_{S(G)}(\rho(G))\theta = \theta^{-1}\Hol(G)\theta\]
holds for any $\theta \in S(G)$, we easily deduce that the quotient
\[ T(G) = \NHol(G)/\Hol(G)\]
acts regularly on the set
\begin{align*}
 \mathcal{H}(G) & = \{\mbox{regular subgroups $N$ of $S(G)$ which are}\\
 &\hspace{4em}\mbox{isomorphic to $G$ and satisfy $N_{S(G)}(N) = \Hol(G)$}\}.
 \end{align*}
We have the alternative description
\[ \mathcal{H}(G) = \{\mbox{normal regular subgroups of $\Hol(G)$ isomorphic to $G$}\}\]
when $G$ is finite, which is the case of interest in this paper.

The group $T(G)$ was first considered in \cite{Miller} by G. A. Miller, who determined the structure of $T(G)$ for finite abelian groups $G$. Later in  \cite{Mills}
W. H. Mills computed the structure of $T(G)$ for finitely generated abelian groups $G$, which was redone in \cite{fgabelian} by an approach using commutative rings. More recently, T. Kohl \cite{Kohl} determined $T(G)$ for dihedral and generalized quaternion groups $G$. Since his work, the structure of $T(G)$ has attracted more attention and has been determined for other new families of groups $G$ in the past years, such as groups of squarefree order \cite{squarefree}, centerless perfect groups \cite{perfect}, and almost simple groups \cite{ASG}. We remark that \cite{perfect} and \cite{ASG} only treated finite groups, but finiteness may be dropped by~\cite{arXiv:2107.13690}.

It turns out that $T(G)$ is elementary $2$-abelian for all of the families of groups $G$ mentioned above. But there also exist groups $G$ for which $T(G)$ is not elementary $2$-abelian, and most of the examples known so far came from finite $p$-groups with $p$ an odd prime. For example, finite $p$-groups of class at most $p-1$ were considered in \cite{class_two, squarefree}, and finite split metacyclic $p$-groups  in \cite{metacyclic}. It was shown that under suitable hypotheses, the order of $T(G)$ is divisible by $p-1$ or $p$. These examples made us wonder whether the order of $T(G)$ can have prime factors lying outside of $p(p-1)$, which was the motivation of this paper. We are able to show that the answer is affirmative, and in fact $T(G)$ can be made arbitrarily large. Our main result is:

\begin{theorem}\label{thm:main}Let $p$ be any odd prime and let $n\geq 4$ be any integer.

There exists a finite $p$-group $G$ of class two of order $p^{n+{n\choose 2}}$ such that $T(G)$ is a semidirect product of an elementary abelian subgroup of order 
\[p^{{n\choose 2}{n+1\choose 2}}\]
 by a subgroup which is isomorphic to
\begin{equation}\label{semidirect} \mathbb{F}_p^{\left(\binom{n}{2}-n\right)\times n}\rtimes \left( \GL_n(\mathbb{F}_p)\times \GL_{\binom{n}{2}-n}(\mathbb{F}_p)\right),\end{equation}
where 
the semidirect product in \eqref{semidirect} is given by 
\[Q^{(A,M)} = M^{-1}QA\]
for any $Q\in \mathbb{F}_p^{\left(\binom{n}{2}-n\right)\times n},\, A\in \GL_n(\mathbb{F}_p)$, and $M\in \GL_{\binom{n}{2}-n}(\mathbb{F}_p)$.
\end{theorem}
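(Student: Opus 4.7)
The plan is to construct $G$ explicitly from a tailored alternating bilinear form and then invoke the bilinear-form framework for class-two $p$-groups (announced in the abstract) to compute $T(G)$.

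First, I would define $G$ as follows. Write $V = \mathbb{F}_p^n$ and $W = \mathbb{F}_p^{\binom{n}{2}}$, and fix a decomposition $W = W_1 \oplus W_2$ with $\dim_{\mathbb{F}_p}(W_1) = n$ and $\dim_{\mathbb{F}_p}(W_2) = \binom{n}{2} - n$; this is available since $n \geq 4$ forces $\binom{n}{2} > n$. Then choose a non-degenerate alternating bilinear form $\beta : V \times V \to W_1$ (trivial radical on $V$, image spanning $W_1$), which exists because $\dim(\extsq) = \binom{n}{2} \geq n = \dim(W_1)$. Let $G$ be the exponent-$p$ class-two group obtained via Baker--Campbell--Hausdorff from the nilpotent Lie algebra on $V \oplus W$ with bracket $[(v_1, w_1), (v_2, w_2)] = \beta(v_1, v_2) \in W_1 \subset W$. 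Then $|G| = p^{n + \binom{n}{2}}$, $[G, G] = W_1$, $Z(G) = W$, and the central subgroup $W_2$ is disjoint from $[G, G]$; this extra central piece is the mechanism that will inject the $\GL_{\binom{n}{2}-n}(\mathbb{F}_p)$ factor into $T(G)$.

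Next, I would compute $\Aut(G)$ and parametrize $\mathcal{H}(G)$. An automorphism of $G$ corresponds under BCH to a Lie algebra automorphism of $V \oplus W$; modulo the ``translation piece'' $\Hom(V, W)$ it is a pair $(A, B) \in \GL(V) \times \GL(W)$ with $B \circ \beta = \beta \circ (A \times A)$. Because $\beta$ lands in $W_1$, this constraint involves only $B|_{W_1}$ and leaves $B|_{W_2}$ completely free, producing a copy of $\GL(W_2) \cong \GL_{\binom{n}{2}-n}(\mathbb{F}_p)$ inside $\Aut(G)$. The class-two framework should then identify $\mathcal{H}(G)$, as a $T(G)$-torsor, with equivalence classes of perturbed brackets $\beta + \sigma : V \times V \to W$ modulo the $\Aut(G)$-action induced from $\Hol(G)$-conjugation, where $\sigma$ ranges over the symmetric bilinear forms in $\Sym^2(V^*) \otimes W$ of $\mathbb{F}_p$-dimension $\binom{n+1}{2}\binom{n}{2}$. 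These symmetric perturbations will give the claimed elementary abelian normal subgroup of $T(G)$.

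Finally, I would identify the quotient of $T(G)$ by that elementary abelian subgroup with the semidirect product in \eqref{semidirect}. The $\GL_n(\mathbb{F}_p)$ factor arises from the $\GL(V)$-action by change of basis on $V$; the $\GL_{\binom{n}{2}-n}(\mathbb{F}_p)$ factor from the free action of $\GL(W_2)$ on $W_2$; and the intertwining space $\mathbb{F}_p^{(\binom{n}{2}-n) \times n}$ from $\Hom(V, W_2)$. The prescribed formula $Q^{(A,M)} = M^{-1} Q A$ is the standard conjugation rule for the off-diagonal block inside the block upper-triangular subgroup of $\GL(V \oplus W_2)$. The main technical obstacle I foresee is verifying that distinct symmetric perturbations $\sigma$ descend to distinct elements of $T(G)$ (rather than being identified under $\Hol(G)$-conjugation) and that the extension splits exactly as stated; the splitting should be witnessed by explicit lifts of $(A, \mathrm{id}_{W_2})$ and $(\mathrm{id}_V, M)$ to honest automorphisms of $G$.
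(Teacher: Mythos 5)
There is a genuine gap, and it is structural: your mechanism for producing the two $\GL$ factors runs backwards. You propose that the copy of $\GL(W_2)\cong\GL_{\binom{n}{2}-n}(\mathbb{F}_p)$ sitting \emph{inside $\Aut(G)$} will ``inject the factor into $T(G)$''. But $\Aut(G)$ is the stabilizer of $1$ in $\Hol(G)$, so every automorphism lies in $\Hol(G)$ and represents the \emph{trivial} coset of $T(G)=\NHol(G)/\Hol(G)$. Far from helping, a large automorphism group makes $T(G)$ small: elements of $T(G)$ correspond to regular subgroups that are \emph{normal} in $\Hol(G)$, i.e.\ stable under conjugation by $\Aut(G)$, which in the bilinear-form picture is exactly the equivariance condition \eqref{eqn:bilinear}. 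Your $G$ is in fact the direct product $H\times W_2$, where $H$ is the BCH group on $V\oplus W_1$ (the bracket never touches $W_2$); hence $G'=W_1\subsetneq W=Z(G)$, and $\Aut(G)$ contains $\GL(W_2)$, the central translations coming from $\Hom(V,W)$, and the maps acting as $\lambda$ on $V$, $\lambda^2$ on $W_1$ and trivially on $W_2$. Testing \eqref{eqn:bilinear} against these automorphisms (using that $p$ is odd) forces every admissible form $\Delta:G/Z(G)\times G/G'\to Z(G)$ to vanish on $V\times W_2$ and to take all of its values in $W_1$. Consequently the elementary abelian subgroup you want to realize by symmetric perturbations valued in all of $W$ collapses to order at most $p^{n\binom{n+1}{2}}$, strictly below the required $p^{\binom{n}{2}\binom{n+1}{2}}$ since $n<\binom{n}{2}$ for $n\geq 4$; the same constraint wipes out the $\Hom(V,W_2)$ and $\GL(W_2)$ blocks you want in the quotient. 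Note also that the machinery you invoke is unavailable for your $G$: the symmetric/anti-symmetric splitting and Theorem \ref{thm:T(G)} are proved under the hypotheses $G'=Z(G)$ and $\Aut(G)=\Aut_c(G)$, both of which fail here. (A further conceptual slip: $\mathcal{H}(G)$ is parametrized by $\Aut(G)$-\emph{invariant} forms, not by classes ``modulo the $\Aut(G)$-action''.)

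The paper's construction is the exact opposite of yours. It takes $G$ of exponent $p^2$ with $G'=Z(G)=\Frat(G)$, commutator map the \emph{universal} alternating form (so $G'$ has dimension $\binom{n}{2}$ and $G/G'$ dimension $n$, reversing your dimensions), and $p$-th power map $\pi:G/G'\to G'$ given by a full-rank matrix chosen so that $\Aut(G)=\Aut_c(G)$. Making $\Aut(G)$ \emph{minimal} renders \eqref{eqn:bilinear} vacuous, so all bilinear forms $G/G'\times G/G'\to G'$ are admissible: the symmetric ones contribute the elementary abelian group of order $p^{\binom{n}{2}\binom{n+1}{2}}$, while the complement comes from anti-symmetric forms $\Delta_\sigma$, whose circle groups admit isomorphisms $\theta:G\to(G,\circ_\sigma)$ exactly when the restriction data $(\alpha,\widehat{\alpha}(1+2\sigma))$ satisfy $\alpha^{-1}\pi\widehat{\alpha}=\pi(1+2\sigma)^{-1}$ (Proposition \ref{prop:criterion}); solving this matrix equation for a full-rank $\pi$ is what produces $\mathbb{F}_p^{\left(\binom{n}{2}-n\right)\times n}\rtimes\left(\GL_n(\mathbb{F}_p)\times\GL_{\binom{n}{2}-n}(\mathbb{F}_p)\right)$ (Proposition \ref{prop:full rank}). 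So the general linear factors come from isomorphisms of $G$ onto its circle groups, not from automorphisms of $G$, and their existence hinges on the nontrivial power map. Your exponent-$p$ group has $\pi=0$ and an abelian direct factor, so it cannot play this role; the argument cannot be repaired without replacing the construction itself.
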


Since every finite group embeds into $\GL_N(\mathbb{F}_p)$ whenever $N$ is large enough, this implies that:

\begin{corollary}Let $p$ be any odd prime and let $H$ be any finite group.

For all sufficiently large integers $n$, there exists a finite $p$-group $G$ of class two of order $p^{n+{n\choose 2}}$ such that $H$ embeds into $T(G)$.
\end{corollary}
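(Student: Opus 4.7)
The plan is to deduce the corollary directly from Theorem~\ref{thm:main} by observing that the factor $\GL_n(\F_p)$ appearing in the semidirect product~\eqref{semidirect} already accommodates every finite group once $n$ is allowed to grow.

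First, given the finite group $H$, I would invoke Cayley's theorem to fix an embedding $H \hookrightarrow \Sym_{\Order{H}}$. Next, I would realize $\Sym_{\Order{H}}$ inside $\GL_{\Order{H}}(\F_p)$ via the standard faithful permutation representation sending each permutation to its $\Order{H} \times \Order{H}$ permutation matrix over $\F_p$; this is a homomorphism into $\GL_{\Order{H}}(\F_p)$ with trivial kernel since distinct permutations yield distinct matrices. Composing gives an embedding $H \hookrightarrow \GL_N(\F_p)$ with $N = \Order{H}$.

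Then, for any integer $n \geq \max\{4, N\}$, I would embed $\GL_N(\F_p)$ into $\GL_n(\F_p)$ by the block-diagonal map
\[ A \longmapsto \begin{pmatrix} A & 0 \\ 0 & I_{n-N} \end{pmatrix},\]
which is plainly an injective group homomorphism. Chaining this with the previous embedding produces $H \hookrightarrow \GL_n(\F_p)$ for every such $n$.

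Finally, I would apply Theorem~\ref{thm:main}: for each such $n$, there is a finite $p$-group $G$ of class two of order $p^{n+\binom{n}{2}}$ whose associated group $T(G)$ contains a subgroup isomorphic to the semidirect product displayed in~\eqref{semidirect}, and in particular contains a copy of $\GL_n(\F_p)$ (the factor in the right-hand side of the complement). Composing with the embedding $H \hookrightarrow \GL_n(\F_p)$ just built yields $H \hookrightarrow T(G)$, which is the desired conclusion. There is essentially no obstacle here beyond ensuring $n \geq 4$ so that Theorem~\ref{thm:main} applies; the whole argument is a soft, one-line consequence of the main theorem together with the classical fact that $\GL_n(\F_p)$ contains every finite group as a subgroup for $n$ sufficiently large.
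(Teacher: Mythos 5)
Your proposal is correct and is essentially the paper's own argument: the paper deduces the corollary from Theorem~\ref{thm:main} by citing the classical fact that every finite group embeds into $\GL_N(\mathbb{F}_p)$ for $N$ large enough, which you have simply spelled out via Cayley's theorem, permutation matrices, and block-diagonal stabilization. Your observation that the factor $\GL_n(\mathbb{F}_p)$ sits inside the complement of the semidirect product \eqref{semidirect}, and hence inside $T(G)$, is exactly the intended reading.
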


\begin{remark}
  Let $G$ be  one of the groups of  Theorem~\ref{thm:main}.  Since every
  pair  of  regular subgroups  in  $\Hc(G)$  normalize each  other,  our
  construction yields  a large  clique in the  normalizing graph  of $G$
  (see~\cite[Section~7]{CS2}).   Equivalently, every  pair chosen  among
  the group  operations on  $G$ associated to  the elements  of $\Hc(G)$
  yields a  bi-skew brace  on $G$~\cite{Childs-bi-skew, mybi},  that is,
  the set of these operations forms a brace block on $G$~\cite{Koc22}.
\end{remark}

Throughout this paper, we shall use the following notation.

\begin{notation}For any group $G$, we write:
\begin{itemize}
\item $\exp(G)=$ the exponent of $G$;
\item $G'=$ the derived subgroup of $G$;
\item $Z(G)=$ the center of $G$;
\item $\mathrm{Frat}(G)=$ the Frattini subgroup of $G$;
\item $\Aut_c(G)=$ the subgroup of $\Aut(G)$ consisting of the automorphisms which induce the identity on $G/Z(G)$;
\item $\Aut_z(G)=$ the subgroup of $\Aut(G)$ consisting of the automorphisms which induce the identity on $Z(G)$.
\end{itemize}
For any $x,y\in G$, we also write 
\[ x^y = y^{-1}xy\mbox{ and }[x,y]=x^{-1}x^y.\]
Let us recall that in any group $G$ of class $2$, we have 
\[ (xy)^d = x^dy^d[y,x]^{d\choose 2}\mbox{ for all }x,y\in G\mbox{ and }d\in \mathbb{N}.\]
We shall use this identity frequently in some of the calculations.

Finally, the symbol $p$ shall always denote an odd prime.
\end{notation}

Here is a brief outline of this paper. In Section \ref{sec:bilinear forms}, we shall describe a technique, due to the first-named author \cite{class_two}, which allows us to study normal regular subgroups $N$ of $\Hol(G)$ via bilinear forms
\[ \Delta : G/Z(G) \times G/G'\rightarrow Z(G)\]
when $G$ is a finite $p$-group of class two. For the purpose of computing $T(G)$, we only want the $N$ that are isomorphic to $G$. In Section \ref{sec:iso class}, we shall discuss the isomorphism class of $N$ in terms of the corresponding bilinear form $\Delta$ when $G' = Z(G)$. This extra assumption allows us to define the notions of symmetric and anti-symmetric on $\Delta$, which make things significantly simpler. In Section \ref{sec:group structure}, we shall study the structure of $T(G)$ in connection with bilinear forms when $\Aut(G)=\Aut_c(G)$. It turns out that $T(G)$ decomposes as a semidirect product
\[T(G) = \mathcal{S}\rtimes \mathcal{S}',\]
where loosely speaking the part $\mathcal{S}$ comes from symmetric forms and $\mathcal{S}'$ comes from anti-symmetric forms. In Section \ref{sec:special}, we shall specialize our findings to a family of groups $G$ constructed in \cite{special_groups}. These groups $G$ satisfy $G ' = Z(G) = \mathrm{Frat}(G)$ so that the study of the bilinear forms $\Delta$ in question reduces to linear algebra. We shall show that the structure of $T(G)$ is given as in Theorem \ref{thm:main} for these groups $G$ under suitable conditions.

\subsection*{Acknowledgements} We thank the referee for helpful suggestions.


\section{Multiple holomorph via bilinear forms}
\label{sec:bilinear forms}

Let us first recall the following result from \cite[Theorem 5.2]{perfect} and 
\cite[Theorem 1.2]{class_two}.

\begin{theorem}\label{thm:gamma} Let $G$ be any finite group.

The following data are equivalent.
\begin{enumerate}[label=\emph{\arabic*.}]
\item A normal regular subgroup $N$ of $\Hol(G)$.
\item An anti-homomorphism $\gamma : G\rightarrow \Aut(G)$ such that 
\[\gamma(x^\beta) = \gamma(x)^\beta\mbox{ for all $x\in G$ and $\beta\in \Aut(G)$}.\]
\item A group operation $\circ$ on $G$ such that $\Aut(G)\leq \Aut(G,\circ)$ and
\[ (xy)\circ z = (x\circ z)\cdot z^{-1}\cdot (y\circ z)\mbox{ for all }x,y,z\in G,\]
where $z^{-1}$ denotes the inverse of $z$ in $G$. Note that the identity of $(G,\circ)$ coincides with that of $G$.
\end{enumerate}
Moreover, these data are related as follows.
\begin{enumerate}[label=\emph{(\roman*)}]
\item $N=\{\gamma(x)\rho(x): x\in G\}$.
\item $x\circ y = x^{\gamma(y)}y$ for all $x,y\in G$.
\item $(G,\circ)\simeq N$ via $x\mapsto \gamma(x)\rho(x)$.
\end{enumerate}
\end{theorem}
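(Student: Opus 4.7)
My plan is to pass between the three descriptions via the explicit formulas in (i)--(iii) and to check that each defining condition on one side matches the others. The starting point is that $\Hol(G) = \rho(G)\rtimes\Aut(G)$, so every element of $\Hol(G)$ can be written uniquely as $\alpha\rho(x)$ with $\alpha\in\Aut(G)$ and $x\in G$, and $1^{\alpha\rho(x)} = x$. A regular subset $N\subseteq \Hol(G)$ must therefore contain exactly one element $\gamma(x)\rho(x)$ for each $x\in G$, which defines a function $\gamma:G\to\Aut(G)$ and yields (i). Multiplying two such elements via the commutation rule $\rho(w)\alpha = \alpha\rho(w^\alpha)$ for $\alpha\in\Aut(G)$ gives the key identity
\[
\bigl(\gamma(x)\rho(x)\bigr)\bigl(\gamma(y)\rho(y)\bigr) = \gamma(x)\gamma(y)\,\rho\bigl(x^{\gamma(y)}y\bigr).
\]
Closure of $N$ under multiplication is then equivalent to $\gamma\bigl(x^{\gamma(y)}y\bigr) = \gamma(x)\gamma(y)$, while normalization of $N$ by $\Aut(G)$ reads $\gamma(x^\beta) = \gamma(x)^\beta$ (since $\beta^{-1}\gamma(x)\rho(x)\beta = \gamma(x)^\beta\rho(x^\beta)$). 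Substituting $x \mapsto x^{\gamma(y)^{-1}}$ in the closure identity and invoking the compatibility rearranges it to $\gamma(xy) = \gamma(y)\gamma(x)$; the converse is the same manipulation in reverse. A short direct computation using the commutation rule then shows that normalization of $N$ by $\rho(G)$ is automatic once closure and $\Aut(G)$-normalization hold, establishing (1)$\Leftrightarrow$(2).

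For (2)$\Leftrightarrow$(3), set $x\circ y := x^{\gamma(y)}y$ as in (ii). The anti-homomorphism property of $\gamma$ is equivalent to associativity of $\circ$; the compatibility $\gamma(y^\beta) = \gamma(y)^\beta$ is equivalent to every $\beta\in\Aut(G)$ being an automorphism of $(G,\circ)$; and $\gamma(1) = \gamma(1)^2 = \id$ makes $1$ the identity of $\circ$, so $(G,\circ)$ is a group. The identity $(xy)\circ z = (x\circ z)z^{-1}(y\circ z)$ is then immediate because $\gamma(z)$ is an automorphism of $G$. Conversely, starting from $\circ$, the formula $\gamma(y):x\mapsto (x\circ y)y^{-1}$ defines an automorphism of $G$ (homomorphism by the very identity just displayed, bijection from right $\circ$-translation), and the two conditions on $\gamma$ in (2) translate back directly into $\Aut(G)\leq\Aut(G,\circ)$ and associativity of $\circ$. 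Finally, (iii) reads $\eta(x)\eta(y) = \eta(x\circ y)$ with $\eta(x) := \gamma(x)\rho(x)$, which follows from the key identity above together with $\gamma(x\circ y) = \gamma(x)\gamma(y)$.

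The main obstacle will be bookkeeping: since maps compose left to right, conjugation acts as $\alpha^\beta = \beta^{-1}\alpha\beta$, the anti-homomorphism of $\gamma$ is with respect to the original group law (not $\circ$), and translating via the commutation rule requires care about whether $\gamma(y)$ or $\gamma(y)^{-1}$ appears at each step. Once these conventions are pinned down, every implication reduces to a short manipulation of the key identity.
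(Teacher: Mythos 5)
The paper itself contains no proof of this theorem --- it is recalled from \cite[Theorem 5.2]{perfect} and \cite[Theorem 1.2]{class_two} --- but your argument is correct and follows precisely the standard route taken in those sources: write each element of a regular subgroup of $\Hol(G)=\rho(G)\rtimes\Aut(G)$ uniquely as $\gamma(x)\rho(x)$, derive the key product formula $\bigl(\gamma(x)\rho(x)\bigr)\bigl(\gamma(y)\rho(y)\bigr)=\gamma(x)\gamma(y)\rho\bigl(x^{\gamma(y)}y\bigr)$, and translate closure, $\Aut(G)$-normalization, and $\rho(G)$-normalization into the functional equations for $\gamma$, with $x\circ y=x^{\gamma(y)}y$ carrying these over to the circle group. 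The one point to tighten is your phrasing in the $(2)\Leftrightarrow(3)$ step: ``anti-homomorphism $\Leftrightarrow$ associativity'' and ``compatibility $\Leftrightarrow\Aut(G)\leq\Aut(G,\circ)$'' do not decouple, since associativity of $\circ$ is literally the closure identity $\gamma(x^{\gamma(y)}y)=\gamma(x)\gamma(y)$, which is equivalent to $\gamma(xy)=\gamma(y)\gamma(x)$ only in the presence of the compatibility condition (exactly the substitution you carry out in the $(1)\Leftrightarrow(2)$ step), so the equivalence should be asserted between the two pairs of conditions as packages rather than condition-by-condition; likewise, in passing from $(2)$ to $(1)$ one should remark that closure of the finite set $N$ under multiplication, together with $1^{\gamma(x)\rho(x)}=x$, already gives a regular subgroup.
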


\begin{definition}
In the situation of Theorem~\ref{thm:gamma}, we shall refer to the group $(G, \circ)$ as the \emph{circle group}.
\end{definition}

Theorem \ref{thm:gamma} considers all normal regular subgroups of
$\Hol(G)$, but only those which are isomorphic to $G$ correspond to
elements of $T(G)$. The connection is given by the following result of  \cite[Lemma 4.2]{perfect}, which is a rephrasing of a result of~\cite{Miller}.

\begin{prop}\label{prop:theta} Let $G$ be any finite group. Let $N$ be a normal regular subgroup of $\Hol(G)$ with corresponding operation $\circ$ on $G$ such that $N$ is isomorphic to $G$.

For any $\theta\in \NHol(G)$ with $1^\theta = 1$, the following are equivalent.
\begin{enumerate}[label=\emph{(\alph*)}]
\item $\theta : G \rightarrow (G,\circ)$ is a group isomorphism.
\item $\theta$ conjugates $\rho(G)$ to $N$, namely $\rho(G)^\theta = N$.
\end{enumerate}
\end{prop}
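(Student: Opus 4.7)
The plan is to translate both (a) and (b) into a single functional equation on $\theta$ by computing how a typical conjugate $\theta^{-1}\rho(y)\theta$ acts on points. First I would recast $N$ as the right regular representation of the circle group $(G,\circ)$: by Theorem~\ref{thm:gamma}(i)--(ii), each $\gamma(z)\rho(z)\in N$ sends $y\in G$ to $y^{\gamma(z)}\cdot z = y\circ z$, so $N$ coincides with the image $\rho_\circ(G)\subseteq S(G)$ of the right regular representation $\rho_\circ$ of $(G,\circ)$.

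Next, recalling that maps compose left to right, a direct computation gives
\[ x^{\theta^{-1}\rho(y)\theta} = \bigl(x^{\theta^{-1}}\cdot y\bigr)^{\theta} \quad\text{and}\quad x^{\rho_\circ(z)} = x\circ z. \]
Substituting $x' = x^{\theta^{-1}}$, the equality $\theta^{-1}\rho(y)\theta = \rho_\circ(z)$ is therefore equivalent to the condition
\[ (x'\cdot y)^{\theta} = (x')^{\theta}\circ z \quad \text{for all } x'\in G. \]

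For (a)~$\Rightarrow$~(b), if $\theta\colon G\to(G,\circ)$ is a homomorphism, then this condition holds with $z=y^{\theta}$ for every $y$, so each $\theta^{-1}\rho(y)\theta$ lies in $N$; bijectivity of $\theta$ then forces $\rho(G)^{\theta}=N$. For (b)~$\Rightarrow$~(a), given $\rho(G)^{\theta}=N$, for each $y$ there exists some $z=f(y)\in G$ satisfying the displayed condition; setting $x'=1$ and invoking $1^{\theta}=1$ together with the fact (noted in Theorem~\ref{thm:gamma}) that $1$ is also the identity of $(G,\circ)$ yields $f(y)=y^{\theta}$, and plugging this back gives exactly the homomorphism property of $\theta$.

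The whole argument is just unpacking definitions, so the only real pitfall is the bookkeeping with the two operations and the left-to-right composition convention; in particular the hypothesis $\theta\in\NHol(G)$ plays no role in the equivalence itself, being relevant only to ensure that a $\theta$ conjugating $\rho(G)$ to some regular subgroup produces an $N$ normal in $\Hol(G)$.
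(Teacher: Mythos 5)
Your proof is correct. Note that the paper does not prove Proposition~\ref{prop:theta} at all: it quotes it from \cite[Lemma 4.2]{perfect} (itself a rephrasing of \cite{Miller}), so there is no in-paper argument to compare against. Your verification is the natural one and it is complete: the identification $N=\rho_\circ(G)$ follows exactly from Theorem~\ref{thm:gamma}(i)--(ii), since $y^{\gamma(z)\rho(z)}=y^{\gamma(z)}z=y\circ z$; the computation $x^{\theta^{-1}\rho(y)\theta}=(x^{\theta^{-1}}y)^{\theta}$ is right under the left-to-right composition convention; and both implications are handled correctly, with the evaluation at $x'=1$ together with $1^{\theta}=1$ (and the fact that $1$ is also the identity of $(G,\circ)$) pinning down $f(y)=y^{\theta}$ in the direction (b)$\Rightarrow$(a). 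Your closing observation is also accurate: the equivalence holds for any bijection $\theta$ of $G$ fixing $1$, and the hypothesis $\theta\in\NHol(G)$ only matters for the surrounding theory, since for such $\theta$ the subgroup $\rho(G)^{\theta}$ is automatically normal in $\Hol(G)^{\theta}=\Hol(G)$, which is how the proposition gets used to parametrize $T(G)$.
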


For finite $p$-groups of class two, we further have the following linear technique from \cite[Proposition 2.2]{class_two} by the first-named author. We shall employ this approach because many of the calculations reduce to linear algebra and hence are much easier.

\begin{theorem}\label{thm:bilinear forms}Let $G$ be any finite $p$-group of class two.

There is a one-to-one correspondence between the following:
\begin{enumerate}[label=\emph{(\arabic*)}]
\item Normal regular subgroups $N$ of $\Hol(G)$ such that
\begin{equation}\label{gamma cond} \gamma(x) \in \Aut_c(G) \cap \Aut_z(G)\mbox{ for all }x\in G\end{equation}
for the corresponding anti-homomorphism $\gamma : G\rightarrow\Aut(G)$.
\item Bilinear forms $\Delta : G/Z(G)\times G/G'\rightarrow Z(G)$ such that
\begin{equation}\label{eqn:bilinear} \Delta(x^\beta,y^\beta) = \Delta(x,y)^\beta\mbox{ for all }x,y\in G\mbox{ and }\beta\in \Aut(G).\end{equation}
\end{enumerate}
The correspondence is given by
\[ \Delta(x,y) =x^{-1}x^{\gamma(y)}\mbox{ for all }x,y\in G,\]
and in this case the corresponding operation $\circ$ on $G$ is given by
\begin{equation}\label{eqn:circ} 
x\circ y = x^{\gamma(y)}y = x y\Delta(x,y)\mbox{ for all }x,y\in G.
\end{equation}
For simplicity, we are writing $\Delta(x,y)$ instead of $\Delta(xZ(G),yG')$.
\end{theorem}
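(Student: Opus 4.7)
The plan is to verify that the assignment $\gamma \mapsto \Delta$ given by $\Delta(x,y) = x^{-1}x^{\gamma(y)}$, together with its candidate inverse determined by $x^{\gamma(y)} = x\,\Delta(x,y)$, defines a bijection between the two sets of data. The operation formula \eqref{eqn:circ} will then follow at once from item (ii) of Theorem~\ref{thm:gamma} together with the centrality of $\Delta(x,y)$.

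Starting from $\gamma$: since $\gamma(y) \in \Aut_c(G)$ induces the identity on $G/Z(G)$, we immediately obtain $\Delta(x,y) \in Z(G)$. Well-definedness in the first slot uses $\gamma(y) \in \Aut_z(G)$ (to fix the central shift) together with the centrality of $\Delta(x,y)$. Bilinearity in the first slot follows by expanding $(xx')^{\gamma(y)} = x^{\gamma(y)} x'^{\gamma(y)}$ and reshuffling the central factors; in the second slot, the fact that $\gamma$ is an anti-homomorphism yields
\[ x^{\gamma(yy')} = (x\,\Delta(x,y'))^{\gamma(y)} = x\,\Delta(x,y)\,\Delta(x,y'), \]
where $\Delta(x,y') \in Z(G)$ is unchanged under $\gamma(y) \in \Aut_z(G)$. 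The equivariance \eqref{eqn:bilinear} is a one-line substitution using the identity $\gamma(y^\beta) = \beta^{-1}\gamma(y)\beta$.

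Going back from $\Delta$: I define $\gamma(y)$ by $x^{\gamma(y)} = x\,\Delta(x,y)$, and bilinearity in the first slot combined with centrality makes $\gamma(y)$ a homomorphism. It lies in $\Aut_c(G)$ by construction and in $\Aut_z(G)$ because $\Delta$ factors through $G/Z(G)$ in the first slot, so $\gamma(y)$ fixes $Z(G)$ pointwise; any endomorphism of the finite group $G$ with these two properties has trivial kernel and is therefore an automorphism. The anti-homomorphism property of $\gamma$ is exactly bilinearity in the second slot (again using centrality of $\Delta$-values), and the equivariance of $\Delta$ translates into $\gamma(y^\beta) = \gamma(y)^\beta$ by a symmetric substitution. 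The two constructions are mutual inverses by inspection.

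The main subtlety is the well-definedness of $\Delta$ in the second slot, which demands that $\gamma$ be trivial on $G'$. Since $\gamma$ is an anti-homomorphism, a short manipulation gives $\gamma([a,b]) = [\gamma(b),\gamma(a)]$, so it suffices to show that the image of $\gamma$, which lies in $\Aut_c(G) \cap \Aut_z(G)$, is abelian. For any two $\alpha,\beta$ in this intersection write $x^\alpha = x\,f_\alpha(x)$ and $x^\beta = x\,f_\beta(x)$ with $f_\alpha(x), f_\beta(x) \in Z(G)$; because $\beta$ fixes $Z(G)$ pointwise,
\[ x^{\alpha\beta} = (x\,f_\alpha(x))^\beta = x\,f_\beta(x)\,f_\alpha(x), \]
and symmetrically $x^{\beta\alpha} = x\,f_\alpha(x)\,f_\beta(x)$, which agree by centrality of the $f$-values. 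Hence $\Aut_c(G) \cap \Aut_z(G)$ is abelian, $\gamma(G') = 1$, and $\Delta$ descends to a bilinear form on $G/Z(G) \times G/G'$ as required.
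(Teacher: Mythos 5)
The paper never proves this theorem: it is quoted verbatim from \cite[Proposition 2.2]{class_two}, so there is no internal proof to compare yours against, and your argument has to be judged on its own merits. On those merits it is correct and complete. You verify that $\gamma\mapsto\Delta$ with $\Delta(x,y)=x^{-1}x^{\gamma(y)}$, and the reverse assignment $x^{\gamma(y)}=x\,\Delta(x,y)$, are mutually inverse and carry each set of conditions to the other, using Theorem~\ref{thm:gamma} to pass between $N$ and $\gamma$; and you correctly isolate the one genuinely non-routine point, namely well-definedness of $\Delta$ in the second slot, i.e.\ that $\gamma(G')=1$. Your reduction of this to the abelianness of $\Aut_c(G)\cap\Aut_z(G)$, via the central-shift computation $x^{\alpha\beta}=x\,f_\beta(x)f_\alpha(x)=x^{\beta\alpha}$, is exactly the right key fact, and the rest (bilinearity in each slot, the equivariance \eqref{eqn:bilinear} in both directions, injectivity of the endomorphism $\gamma(y)$ forcing it to be an automorphism of the finite group $G$, and the derivation of \eqref{eqn:circ} from Theorem~\ref{thm:gamma}(ii)) is routine and correctly executed.

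Two minor remarks. First, for an anti-homomorphism the correct identity is $\gamma([a,b])=\gamma(b)\gamma(a)\gamma(b)^{-1}\gamma(a)^{-1}=[\gamma(b)^{-1},\gamma(a)^{-1}]$, not $[\gamma(b),\gamma(a)]$ as you wrote; this slip is harmless, since all that matters is that the value is a commutator of two elements of the group $\gamma(G)\leq\Aut_c(G)\cap\Aut_z(G)$, which is trivial once that intersection is shown to be abelian. Second, it is worth being explicit about where the hypotheses enter: finiteness is used in Theorem~\ref{thm:gamma} and in your ``injective endomorphism is an automorphism'' step (alternatively, surjectivity of $\gamma(y)$ can be checked directly, since $w\,\Delta(w,y)^{-1}$ is a preimage of $w$), while the class-two hypothesis is needed only to make $G/Z(G)$ abelian, so that the phrase ``bilinear form on $G/Z(G)\times G/G'$'' is meaningful.
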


In what follows, let us assume that $G$ is a finite $p$-group of class two so that Theorem \ref{thm:bilinear forms} applies. For brevity, let us put
\[ B = \{ \mbox{bilinear forms $\Delta : G/Z(G) \times G/G'\rightarrow Z(G)$ satisfying (\ref{eqn:bilinear})}\},\]
and note that $B$ is an abelian group under pointwise multiplication in $Z(G)$. Since $G$ has class two, the commutator $[\, ,\,]$ on $G$ is bilinear and we may use it construct elements of $B$ as follows.

\begin{example}\label{example:power maps}For each $c\in \mathbb{Z}$, we have the power bilinear form
\[\Delta_{[c]}: G/Z(G) \times G/G' \rightarrow Z(G);\,\ \Delta_{[c]}(x,y) = [x,y]^c,\]
which clearly satisfies (\ref{eqn:bilinear}). 
\end{example}

More generally, we can replace the power map $[\, ,\,]\mapsto [\,,\,]^c$ on $G'$ by any endomorphism on $G'$, as it is suggested by the following.

\begin{example}\label{example:sigma}For each $\sigma\in \End(G')$, we have the bilinear form
\[ \Delta_\sigma : G/Z(G) \times G/G'\rightarrow Z(G);\,\ \Delta_\sigma(x,y) = [x,y]^{\sigma},\]
which satisfies (\ref{eqn:bilinear}) if and only if 
\[\sigma\overline{\beta} = \overline{\beta}\sigma \mbox{ for all }\beta\in \Aut(G), \]
where $\overline{\beta}$ denotes the automorphism on $G'$ induced by $\beta$.
\end{example}

Consider $\Delta\in B$ and let $\circ$ denote the corresponding operation on $G$. We know that $\Delta$ gives rise to an element of $T(G)$ if and only if $(G,\circ)$ is isomorphic to $G$. In this case, by Proposition \ref{prop:theta}, the corresponding element of $T(G)$ is the coset $\theta\Hol(G)$, where 
\[\theta : G\rightarrow (G,\circ)\]
denotes any choice of isomorphism, namely a bijection such that
\begin{equation}\label{iso condition} (xy)^\theta = x^\theta \circ y^\theta = x^\theta y^\theta \Delta(x^\theta,y^\theta)\mbox{ for all }x,y\in G.\end{equation}
We end this section with the following example from \cite{class_two}.

\begin{example}\label{example:power maps'}For each $c\in \mathbb{Z}$, consider $\Delta_{[c]}\in B$ and let
\[ x\circ_{[c]} y = xy\Delta_{[c]}(x,y) = xy[x,y]^{c}\]
denote the corresponding group operation on $G$. In the case that
\[c\not\equiv\textstyle-\frac{1}{2} \hspace{-3mm}\pmod{p},\mbox{ in other words } 2c+1\not\equiv 0\hspace{-3mm}\pmod{p},\]
we know from \cite{class_two} that $(G,\circ_{[c]})$ is isomorphic to $G$. Explicitly, we may obtain an isomorphism via the power map
\[ \theta_d : G\rightarrow (G,\circ_{[c]});\,\ x^{\theta_d} = x^d,\]
where $d\in \mathbb{Z}$ is any integer such that
\[  d(2c+1) \equiv 1\hspace{-3mm}\pmod{\exp(G')},\mbox{ that is }\textstyle c\equiv -\frac{d-1}{2d}\hspace{-3mm}\pmod{\exp(G')}.\]
The bijectivity of $\theta_d$ is clear. For any $x,y\in G$, we have
\begin{align*}
(xy)^{\theta_d}& = (xy)^d\\
& = x^dy^d[y,x]^{\frac{d(d-1)}{2}}\\
& = x^dy^d[x^d,y^d]^{-\frac{d-1}{2d}}\\
& = x^{\theta_d}y^{\theta_d}\Delta_{[c]}(x^{\theta_d},y^{\theta_d}),
\end{align*}
where the second equality holds because $G$ has class two. We then see from (\ref{iso condition}) that $\theta_d$ is indeed an isomorphism. The set 
\[ \{\theta_d\Hol(G) : d\in \mathbb{Z}\mbox{ coprime to $p$}\},\]
consisting of the cosets defined by these $\theta_d$ is precisely the cyclic subgroup of $T(G)$ of order $(p-1)p^{r-1}$ constructed in \cite[Proposition 3.1]{class_two}, where $p^r = \exp(G/Z(G))$. We note that $\exp(G/Z(G)) = \exp(G')$ because $G$ is assumed to have class two.
\end{example}


\section{Isomorphism class of the circle group}
\label{sec:iso class}

Throughout this section, we shall assume that $G$ is a finite $p$-group of class two such that $G' = Z(G)$. We are then looking at the set
\[ B = \{ \mbox{bilinear forms $\Delta : G/G' \times G/G'\rightarrow G'$ satisfying (\ref{eqn:bilinear})}\},\]
which is an abelian group under pointwise multiplication in $G'$. Since both of the arguments come from $G/G'$, the notions of symmetric and anti-symmetric are defined. In particular, we have the subgroup
\[ S = \{\Delta \in B : \Delta(y,x) = \Delta(x,y)\mbox{ for all }x,y\in G\}\]
consisting of the symmetric forms, and similarly the subgroup
\[ S' = \{\Delta\in B : \Delta(y,x) = \Delta(x,y)^{-1}\mbox{ for all }x,y\in G\}\]
consisting of the anti-symmetric forms. It is clear that $S$ and $S'$ intersect trivially. Since every $\Delta\in B$ may be decomposed as
\[ \Delta(x,y) =\left(\Delta(x,y)\Delta(y,x)\right)^{1/2} \cdot\left(\Delta(x,y) \Delta(y,x)^{-1}\right)^{ 1/2},\]
we see that $B=S\times S'$ is a direct product of $S$ and $S'$.

Given any $\Delta\in B$ with corresponding operation $\circ$ on $G$, we wish to determine when $(G,\circ)$ is isomorphic to $G$. By the next proposition and corollary, we only need to consider anti-symmetric forms.

\begin{prop}\label{slice in B}Let $\Delta_1,\Delta_2\in B$ be such that $\Delta_1^{-1}\Delta_2$ is symmetric, and let $\circ_1,\circ_2$, respectively, denote their corresponding operations on $G$ as given by (\ref{eqn:circ}).

The groups $(G,\circ_1)$ and $(G,\circ_2)$ are isomorphic.
\end{prop}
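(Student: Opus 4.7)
The plan is to construct an explicit isomorphism $\theta:(G,\circ_1)\to (G,\circ_2)$ of the form $\theta(x) = x\cdot q(x)$, where $q:G/G'\to G'$ is a ``quadratic form'' associated with the symmetric bilinear form $\Lambda:=\Delta_1^{-1}\Delta_2\in S$. Since $p$ is odd and $G'$ is an abelian $p$-group, squaring is a bijection on $G'$, so the definition
\[q(x) := \Lambda(x,x)^{1/2}\]
makes sense and, by bilinearity of $\Lambda$ on $G/G'\times G/G'$, depends only on the coset $xG'$. The key identity, which uses both bilinearity \emph{and} symmetry of $\Lambda$, is the polarisation identity
\[\Lambda(xy,xy)=\Lambda(x,x)\,\Lambda(y,y)\,\Lambda(x,y)^2,\]
which yields $q(xy)=q(x)\,q(y)\,\Lambda(x,y)$.

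Setting $\theta(x):=x\,q(x)$, bijectivity follows at once because $\theta(x)\equiv x \pmod{G'}$ and $q$ is constant on each $G'$-coset, so $\theta$ restricts to a bijection on every $G'$-coset. To verify that $\theta$ is a homomorphism $(G,\circ_1)\to(G,\circ_2)$, I exploit that $q(x),q(y)\in G'=Z(G)$ commute with everything, and that $\Delta_2$, being bilinear on $G/Z(G)\times G/G'$, is insensitive to central factors in either argument. A direct computation then gives
\[\theta(x)\circ_2\theta(y) = xy\cdot q(x)q(y)\cdot \Delta_2(x,y),\]
while expanding
\[\theta(x\circ_1 y) = xy\,\Delta_1(x,y)\,q(xy) = xy\,q(x)q(y)\,\Delta_1(x,y)\,\Lambda(x,y)\]
and substituting $\Lambda = \Delta_1^{-1}\Delta_2$ produces the same expression. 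Hence $\theta$ is the desired isomorphism.

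This is a standard polarisation / completion-of-the-square trick, and I do not foresee a serious obstacle. The symmetry hypothesis is used precisely at the polarisation identity, which is what allows $q$ to serve as a ``quadratic refinement'' converting $\Delta_1$ into $\Delta_2$; the hypothesis that $p$ be odd is what guarantees the existence and uniqueness of the square root in $G'$. The standing assumption $G'=Z(G)$ plays no explicit role beyond making the notion of a symmetric form on $B$ meaningful in the first place.
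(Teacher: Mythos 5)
Your proposal is correct and takes essentially the same approach as the paper: the paper's proof uses the very same map $x^\theta = x\Delta(x,x)^{1/2}$ with $\Delta = \Delta_1^{-1}\Delta_2$, and verifies the homomorphism property through the same polarisation identity $\Delta(xy,xy)^{1/2} = \Delta(x,x)^{1/2}\Delta(y,y)^{1/2}\Delta(x,y)$. The only (immaterial) difference is in how bijectivity is obtained: the paper notes that any kernel element lies in $G'$, where $\theta$ is the identity, and invokes finiteness of $G$, whereas you observe directly that $\theta$ permutes each coset of $G'$.
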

\begin{proof} Put $\Delta=\Delta_1^{-1}\Delta_2$, which is symmetric by assumption. Define 
\[\theta : (G,\circ_1)\rightarrow (G,\circ_2);\,\ x^\theta= x\Delta(x,x)^{1/2}.\]
For any $x,y\in G$, we have
\begin{align*}
(x\circ_1 y)^\theta & = (xy\Delta_1(x,y))^\theta \\
 & = xy\Delta_1(x,y)\Delta(xy,xy)^{1/2}\\
 &=xy \Delta_1(x,y)\Delta(x,x)^{1/2}\Delta(y,y)^{1/2}\Delta(x,y)\\
 & = x\Delta(x,x)^{1/2} y\Delta(y,y)^{1/2} \Delta_2(x,y)\\
 &= x\Delta(x,x)^{1/2}\circ_2 y\Delta(y,y)^{1/2}\\
&= x^\theta\circ_2 y^\theta.
\end{align*}
It follows that $\theta$ is a homomorphism.  Since any $x\in \ker(\theta)$ must lie in $G'$ and $\theta$ is the identity on $G'$, we deduce that $\theta$ is injective, and hence bijective because $G$ is finite. This proves that $\theta$ is an isomorphism.
\end{proof}

Taking $\Delta_1$ to be the trivial bilinear form, whose corresponding operation $\circ$ coincides with that of $G$, in particular we have:

\begin{corollary}\label{sym cor}Let $\Delta\in S$ and let $\circ$ denote the corresponding operation on $G$ as given by (\ref{eqn:circ}).

The groups $G$ and $(G,\circ)$ are isomorphic.
\end{corollary}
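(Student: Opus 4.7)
The plan is to deduce this directly from Proposition \ref{slice in B}, which is the substantive result that has already been proved; the corollary is essentially just a specialization.

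First I would take $\Delta_1$ to be the trivial bilinear form, i.e.\ $\Delta_1(x,y) = 1$ for all $x,y\in G$; this trivially lies in $B$ (condition \eqref{eqn:bilinear} holds vacuously). By formula \eqref{eqn:circ} its associated operation $\circ_1$ is
\[ x \circ_1 y = xy\Delta_1(x,y) = xy, \]
so $(G,\circ_1)$ is literally $G$ with its original multiplication. Next I would set $\Delta_2 = \Delta$, the given element of $S$, with corresponding operation $\circ_2 = \circ$.

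Then $\Delta_1^{-1}\Delta_2 = \Delta$, which is symmetric by hypothesis since $\Delta\in S$. So Proposition \ref{slice in B} applies and yields $(G,\circ_1)\simeq (G,\circ_2)$, i.e.\ $G\simeq (G,\circ)$, which is the assertion.

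The proof involves no obstacle: all the work (the explicit construction of the isomorphism $x\mapsto x\Delta(x,x)^{1/2}$, the verification that it is a homomorphism using bilinearity and symmetry, and the injectivity argument via $\ker(\theta)\subseteq G'$) has already been carried out in Proposition \ref{slice in B}. The one point worth noting is that the square root $\Delta(x,x)^{1/2}$ is well-defined because $\Delta(x,x)\in G' = Z(G)$ is an element of a finite abelian $p$-group with $p$ odd, so squaring is an automorphism of $G'$; this is implicit in the proof of the proposition and requires no additional comment here.
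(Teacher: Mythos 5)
Your proof is correct and is exactly the paper's own argument: the paper obtains Corollary~\ref{sym cor} precisely by specializing Proposition~\ref{slice in B} to the case where $\Delta_1$ is the trivial bilinear form (whose operation is the original multiplication of $G$) and $\Delta_2=\Delta$. Your added remark about the square root $\Delta(x,x)^{1/2}$ being well-defined (since $p$ is odd) is a harmless, accurate clarification.
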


In contrast to symmetric forms, the anti-symmetric forms are much harder to understand, except those considered in Example \ref{example:power maps}. Below, for the sake of completeness,  let us just prove several facts that can be established in general.

Let $\Delta\in B$ and let $\circ$ denote the corresponding operation on $G$. For any $x\in G$, let us write $x^{\ominus 1}$  for its inverse in the group $(G,\circ$). A simple calculation shows that 
\[ x^{\ominus1} = x^{-1}\Delta(x,x).\]
For any $x,y\in G$, their commutator in $(G,\circ)$ is then given by
\begin{align}\notag
[x,y]_\circ & = x^{\ominus1}\circ y^{\ominus1}\circ x\circ y \\\notag
& = (x^{-1}\Delta(x,x)\circ y^{-1}\Delta(y,y))\circ (x\circ y)\\\notag
& = x^{-1}y^{-1}\Delta(x,x)\Delta(y,y)\Delta(x^{-1},y^{-1})\circ xy\Delta(x,y)\\\notag
& = [x,y]\Delta(x,x)\Delta(y,y)\Delta(x,y)^2\Delta(x^{-1}y^{-1},xy)\\\label{commutator}
& = [x,y]\Delta(x,y)\Delta(y,x)^{-1}.
\end{align}
This implies that we have the inclusions
\begin{equation}\label{inclusions}
 (G,\circ) ' \leq G' = Z(G) \leq Z(G,\circ).
 \end{equation}
 Hence, the group $(G,\circ)$ is either abelian or has class two, and for it to be isomorphic to $G$, the inclusions in (\ref{inclusions}) must both be equalities.

 \begin{prop}\label{iso prop}Let $\Delta\in S'$ and let $\circ$ denote the corresponding operation on $G$ as given in (\ref{eqn:circ}).
\begin{enumerate}[label=\emph{(\alph*)}]
\item $(G,\circ)$ is abelian if and only if $\Delta = \Delta_{[-1/2]}$.
\item $(G,\circ)'=G'$ if and only if the image of $\Delta_{[1/2]}\Delta$ generates $G'$.
\item $Z(G) = Z(G,\circ)$ if and only if $\Delta_{[1/2]}\Delta$ is non-degenerate, that is, we have $(\Delta_{[1/2]}\Delta)(x,y) = 1$ for all $y\in G$ only when $x\in G'$.
\end{enumerate}
\end{prop}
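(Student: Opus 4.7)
The plan is to specialize the general commutator identity~(\ref{commutator}) to anti-symmetric forms and then read off each of the three statements essentially by inspection. Since $\Delta\in S'$ satisfies $\Delta(y,x)=\Delta(x,y)^{-1}$ and $\Delta_{[1/2]}(x,y)^{2}=[x,y]$, equation~(\ref{commutator}) collapses to
\[
[x,y]_\circ \;=\; [x,y]\,\Delta(x,y)^{2} \;=\; (\Delta_{[1/2]}\Delta)(x,y)^{2}
\]
for all $x,y\in G$. The one ambient fact I will exploit throughout is that $G'\leq Z(G)$ is a $p$-group with $p$ odd, so that the squaring map is a bijection of $G'$; in particular, a square in $G'$ is trivial iff its base is, and a set of squares generates the same subgroup as the original set. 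The $1/2$ exponents, interpreted modulo $\exp(G')$, are legitimate for the same reason.

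Part~(a) is then immediate: $(G,\circ)$ is abelian iff $[x,y]_\circ=1$ for all $x,y$, which by the displayed identity and the squaring bijection is iff $\Delta_{[1/2]}\Delta$ is the trivial form, i.e.\ iff $\Delta=\Delta_{[1/2]}^{-1}=\Delta_{[-1/2]}$.

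For part~(b), the inclusion $(G,\circ)'\leq G'$ is recorded in~(\ref{inclusions}), so I only need to check when the generators $\{[x,y]_\circ : x,y\in G\}$ of $(G,\circ)'$ span all of $G'$. By the displayed identity these generators are the squares of the values of $\Delta_{[1/2]}\Delta$, and by the squaring bijection they generate the same subgroup of $G'$ as the image of $\Delta_{[1/2]}\Delta$ itself. For part~(c), (\ref{inclusions}) gives $Z(G)\leq Z(G,\circ)$, and $x\in Z(G,\circ)$ is characterized by $[x,y]_\circ=1$ for every $y\in G$; the same two ingredients rewrite this as $(\Delta_{[1/2]}\Delta)(x,y)=1$ for every $y$, so $Z(G)=Z(G,\circ)$ becomes exactly the stated left non-degeneracy of $\Delta_{[1/2]}\Delta$. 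There is no genuine obstacle here: all three parts fall out once the commutator formula has been collapsed against the anti-symmetry of $\Delta$, and the only point requiring any care is the passage between $(\Delta_{[1/2]}\Delta)(x,y)$ and its square, which is harmless because $\exp(G')$ is coprime to $2$.
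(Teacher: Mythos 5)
Your proof is correct and takes essentially the same route as the paper's: specialize the commutator identity (\ref{commutator}) to anti-symmetric $\Delta$, obtaining $[x,y]_\circ=(\Delta_{[1/2]}\Delta)(x,y)^2$, and then read off (a)--(c) using the invertibility of $2$ modulo $\exp(G')$ together with the inclusions (\ref{inclusions}). The only point the paper makes explicit that you leave implicit is that $\circ$ coincides with the operation of $G$ on $G'$ (immediate from (\ref{eqn:circ}), since $\Delta(x,y)=1$ when $x\in Z(G)$), which is what makes ``generate'' in part (b) and the subgroup comparison in part (c) unambiguous.
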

\begin{proof}Since $\Delta$ is anti-symmetric, the commutator (\ref{commutator}) becomes
\[ [x,y]_\circ = [x,y]\Delta(x,y)^2,\mbox{ or equivalently }[x,y]_\circ^{1/2} = (\Delta_{[1/2]}\Delta)(x,y).\]
Since $(G,\circ)$ and $G$ have the same identity element, it follows that
\begin{align*}
(G,\circ)\mbox{ is abelian}&\iff \forall x,y\in G : [x,y]_\circ = 1\\
& \iff \forall x,y\in G:  [x,y]\Delta(x,y)^2 = 1\\
& \iff \forall x,y\in G: \Delta(x,y) =[x,y]^{-1/2},
\end{align*}
which proves (a). Observe that the operation $\circ$ coincides with that of $G$ inside $G'$. From the inclusion $(G,\circ)'\leq G'$, we then see that
\begin{align*}
(G,\circ)' = G' &\iff [x,y]_\circ\mbox{ with $x,y\in G$ generate }G'\\
&\iff [x,y]^{1/2}_\circ\mbox{ with $x,y\in G$ generate }G'\\
&\iff (\Delta_{[1/2]}\Delta)(x,y)\mbox{ with $x,y\in G$ generate }G',
\end{align*}
and this yields (b). Finally, notice that $Z(G,\circ)$ is the set consisting of all of the elements $x\in G$ for which
\[ [x,y]_\circ =1,\mbox{ or equivalently }(\Delta_{[1/2]}\Delta)(x,y)=1\]
holds for all $y\in G$. Since $Z(G)\leq Z(G,\circ)$ and $G'=Z(G)$, we deduce that (c) indeed holds.
\end{proof}

\begin{example}\label{example:sigma'}For each $\sigma\in\End(G')$, consider $\Delta_\sigma$ from Example \ref{example:sigma}, and assuming that it does satisfy (\ref{eqn:bilinear}), let
\[ x\circ_\sigma y = xy\Delta_\sigma(x,y) = xy[x,y]^{\sigma}\]
denote the corresponding operation on $G$. Note that
\[ ( \Delta_{[1/2]}\Delta)(x,y) = [x,y]^{1/2+\sigma} = [x,y]^{\frac{1}{2}(1+2\sigma)}.\]
We then deduce from Proposition \ref{iso prop} the following:
\begin{enumerate}[label=(\alph*)]
\item If $1+2\sigma\not\in \Aut(G')$, then $(G,\circ_\sigma)'$ is a proper subgroup of $G'$, and so $(G,\circ_\sigma)$ cannot be isomorphic to $G$.
\item If $1+2\sigma\in \Aut(G')$, then $(G,\circ_\sigma)'= G'$ holds, and
\begin{align*}
\forall y \in G:  ( \Delta_{[1/2]}\Delta)(x,y)= 1
&\implies \forall y \in G:  [x,y]^{\frac{1}{2}(1+2\sigma)} = 1\\
&\implies \forall y\in G: [x,y]=1\\
& \implies x \in Z(G) = G',
\end{align*}
whence $Z(G)=Z(G,\circ)$ holds as well.
\end{enumerate}
However, even though we have equalities
\begin{equation}\label{sigma eqn}(G,\circ_\sigma)' = G' = Z(G) = Z(G,\circ_\sigma),\end{equation}
it is unclear at this stage for which $\sigma$ we have that $(G,\circ_\sigma)$ and $G$ are actually isomorphic. The only thing that can be said in general is that they are always isoclinic.
\end{example}

Recall that two groups $\Gamma_1,\Gamma_2$ are \emph{isoclinic} if there are isomorphisms 
\[ \varphi : \Gamma_1/Z(\Gamma_1) \rightarrow \Gamma_2/Z(\Gamma_2),\,\ \psi : \Gamma_1'\rightarrow \Gamma_2'\]
which are compatible with commutators, namely
\[   [\gamma^\varphi,\delta^\varphi] = [\gamma,\delta]^{\psi} \mbox{ for all }\gamma,\delta\in \Gamma_1.\]
We end this section with the next proposition.

\begin{prop}Let $\sigma\in \End(G')$ be such that $1+2\sigma\in \Aut(G')$ and also assume that $\Delta_\sigma$ satisfies (\ref{eqn:bilinear}).

The groups $G$ and $(G,\circ_\sigma)$ are isoclinic.
\end{prop}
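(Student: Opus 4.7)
The plan is to exhibit explicit witnesses $\varphi$ and $\psi$ for the isoclinism, using the chain of equalities
\[ (G,\circ_\sigma)' = G' = Z(G) = Z(G,\circ_\sigma) \]
already established in Example~\ref{example:sigma'}. Since these are equalities of \emph{subsets} of $G$, the cosets of $Z(G)$ in $G$ and those of $Z(G,\circ_\sigma)$ in $(G,\circ_\sigma)$ partition the same set $G$ in the same way, so it is natural to try $\varphi = \mathrm{id}$ on the underlying sets. For any $x,y\in G$ we have $x\circ_\sigma y = xy[x,y]^\sigma$, and since $[x,y]^\sigma \in G' = Z(G)$, the elements $xy$ and $x\circ_\sigma y$ project to the same coset. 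Hence $\varphi$ is a well-defined group isomorphism $G/Z(G) \to (G,\circ_\sigma)/Z(G,\circ_\sigma)$.

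Next, I would define $\psi : G' \to (G,\circ_\sigma)'$ by $\psi(a) = a^{1+2\sigma}$. Since $G' = (G,\circ_\sigma)'$ as subsets, and since $a\circ_\sigma b = ab$ whenever $a$ or $b$ lies in $Z(G) = G'$ (because then $[a,b]^\sigma = 1$), the two group operations coincide on this common subset. The hypothesis $1+2\sigma \in \Aut(G')$ then immediately gives that $\psi$ is a group automorphism of $G'$, viewed as an isomorphism to $(G,\circ_\sigma)'$.

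It remains to verify the commutator compatibility. Because $\Delta_\sigma$ is anti-symmetric (from $[y,x]^\sigma = [x,y]^{-\sigma}$), the general formula \eqref{commutator} specializes to
\[ [x,y]_{\circ_\sigma} = [x,y]\,\Delta_\sigma(x,y)^2 = [x,y]^{1+2\sigma} = \psi([x,y]) \]
for all $x,y \in G$. Since $\varphi$ is the identity on underlying elements, this is exactly the required identity $[x^\varphi, y^\varphi]_{\circ_\sigma} = [x,y]^\psi$, completing the proof.

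There is no real obstacle here: the structural content has already been collected in Example~\ref{example:sigma'} and the commutator computation \eqref{commutator}. The only care needed is to confirm that the \emph{same} set-theoretic identity map serves both as $\varphi$ and as the starting point for $\psi$, and that $\psi$ must be twisted by $1+2\sigma$ rather than taken to be the identity, precisely because commutators in the two groups differ by this factor.
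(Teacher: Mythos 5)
Your proof is correct and follows essentially the same route as the paper: take $\varphi$ to be the identity on $G/Z(G)$ (well-defined because $\circ_\sigma$ agrees with the operation of $G$ modulo $G'=Z(G)$), take $\psi = 1+2\sigma$ on $G'$ (an isomorphism because the two operations coincide on $G'$), and verify compatibility via the commutator formula $[x,y]_{\circ_\sigma} = [x,y]^{1+2\sigma}$ from \eqref{commutator}. The only difference is that you spell out the well-definedness checks slightly more explicitly than the paper does.
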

\begin{proof}We know that the equalities in (\ref{sigma eqn}) hold. Take
\[ \varphi : G/Z(G) \rightarrow (G,\circ)/Z(G,\circ),\,\ \psi : G'\rightarrow (G,\circ)',\]
respectively, to be the identity map and the automorphism $1+2\sigma$. It is clear from (\ref{eqn:circ}) that the operation $\circ$ coincides with that of $G$ when taken modulo $Z(G)$ as well as when restricted to $G'$. Thus, both $\varphi$ and $\psi$ are isomorphisms. For any $x,y\in G$, it follows from (\ref{commutator}) that
\[ [x^\varphi,y^\varphi]_\circ  = [x,y]_\circ = [x,y]^{1+2\sigma} = [x,y]^\psi,\]
and this proves the claim.
\end{proof}

\section{Group structure of the multiple holomorph}\label{sec:group structure} 

Throughout this section, we shall assume that $G$ is a finite $p$-group of class two such that $G'=Z(G)$ and $\Aut(G) =\Aut_c(G)$. These two conditions imply that $\Aut(G)=\Aut_z(G)$ also, and so every normal regular subgroup $N$ of $\Hol(G)$ satisfies (\ref{gamma cond}). Moreover, the condition (\ref{eqn:bilinear}) is now vacuous, so we are simply looking at the set
\[ B = \{\mbox{bilinear forms $\Delta : G/G'\times G/G'\rightarrow G'$}\}.\]
As explained in Section \ref{sec:bilinear forms}, each $\Delta \in B$, with corresponding operation $\circ$ say, gives rise to an element $\theta\Hol(G)$ of $T(G)$ whenever there is an isomorphism $\theta:G\rightarrow (G,\circ)$. By Theorem \ref{thm:bilinear forms}, every element of $T(G)$ arises in this way, and the associated bilinear form is unique. 

Now, consider an arbitrary element of $T(G)$ represented as
\[ \theta\Hol(G),\mbox{ where $\theta : G\rightarrow (G,\circ)$ is an isomorphism}\]
for the operation $\circ$ corresponding to some $\Delta\in B$. The isomorphism $\theta$ induces via restriction isomorphisms
\[ G/G' \rightarrow (G,\circ)/(G,\circ)',\,\ G'\rightarrow (G,\circ)'.\]
But from (\ref{inclusions}), we must have the equalities
\[ (G,\circ)' = G' = Z(G) = Z(G,\circ).\]
Also from (\ref{eqn:circ}), the operation $\circ$ coincides with that of $G$ when taken modulo $G'$ and when restricted to $G'$. Thus $\theta$ induces automorphisms 
\begin{equation}\label{res} \res_c(\theta) : G/G'\rightarrow G/G',\,\ \res_z(\theta) : G'\rightarrow G'.\end{equation}
Note that they do not depend on the choice of $\theta$, for if $\theta' : G\rightarrow (G,\circ)$ is another isomorphism, then $\theta'\theta^{-1}\in \Aut(G)$, and by assumption
\[ \Aut(G) = \Aut_c(G) = \Aut_z(G).\]
This gives us a well-defined map
\[ \res : T(G) \rightarrow \Aut(G/G') \times \Aut(G');\,\ \res = (\res_c,\res_z),\]
which is clearly a homomorphism. 

Observe that the range of $\res$ acts naturally on $B$ via
\[\Delta^{(\alpha,\beta)}(x,y)= \Delta(x^{\alpha^{-1}},y^{\alpha^{-1}})^{\beta} \mbox{ for }\Delta\in B,\, \alpha\in \Aut(G/G'),\, \beta\in \Aut(G').\]
The next proposition shows that the group operations of $T(G)$ and $B$ are related by this action.

\begin{proposition}\label{prop:compare}
For $i=1,2$, let $\Delta_i\in B$ with corresponding operation $\circ_i$ be such that there exists an isomorphism $\theta_i:G\rightarrow (G,\circ_i)$.

The bijection $\theta_1\theta_2:G\rightarrow (G,\circ)$ is an isomorphism for the operation $\circ$ corresponding to the bilinear form $\Delta_1^{\res(\theta_2)}\Delta_2$.
\end{proposition}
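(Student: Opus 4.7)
The plan is to verify the homomorphism criterion \eqref{iso condition} directly for $\theta_1\theta_2$: I need to check that
\[(xy)^{\theta_1\theta_2} = x^{\theta_1\theta_2}\,y^{\theta_1\theta_2}\,\Delta\bigl(x^{\theta_1\theta_2}, y^{\theta_1\theta_2}\bigr)\]
for all $x,y\in G$, where $\Delta = \Delta_1^{\res(\theta_2)}\Delta_2$. This simultaneously identifies $\circ$ as the operation attached to $\Delta$ (by the uniqueness half of Theorem~\ref{thm:bilinear forms}) and shows that $\theta_1\theta_2$ is the relevant isomorphism.

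The computation goes in two stages. First, since $\theta_1:G\to(G,\circ_1)$ is an isomorphism, \eqref{eqn:circ} gives
\[(xy)^{\theta_1} \;=\; x^{\theta_1}y^{\theta_1}\,\Delta_1\bigl(x^{\theta_1},y^{\theta_1}\bigr),\]
where the right-hand side is an ordinary product of three elements in $G$. Now I apply $\theta_2$, viewed as a homomorphism $G\to(G,\circ_2)$, and expand with \eqref{eqn:circ} again. The third factor $\Delta_1(x^{\theta_1},y^{\theta_1})$ lies in $G'=Z(G)$, so two simplifications happen: by bilinearity of $\Delta_2$ on $G/G'$, the $\circ_2$-multiplication by an element of $G'$ reduces to ordinary multiplication; and the $\theta_2$-image of that central element is precisely $\Delta_1(x^{\theta_1},y^{\theta_1})^{\res_z(\theta_2)}$. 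Collecting terms yields
\[(xy)^{\theta_1\theta_2}\;=\; x^{\theta_1\theta_2}y^{\theta_1\theta_2}\,\Delta_2\bigl(x^{\theta_1\theta_2},y^{\theta_1\theta_2}\bigr)\cdot \Delta_1\bigl(x^{\theta_1},y^{\theta_1}\bigr)^{\res_z(\theta_2)}.\]

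It then remains to recognise the trailing factor as $\Delta_1^{\res(\theta_2)}\!\bigl(x^{\theta_1\theta_2},y^{\theta_1\theta_2}\bigr)$. Unwinding the definition of the action, this amounts to the congruence
\[\bigl(x^{\theta_1\theta_2}\bigr)^{\res_c(\theta_2)^{-1}}\equiv x^{\theta_1}\pmod{G'},\]
which is immediate: by construction $\res_c(\theta_2)$ is the automorphism of $G/G'$ induced by $\theta_2$, because $\circ_2$ coincides with the ordinary product modulo $G'$; and $\Delta_1$ depends only on its arguments modulo $G'$. The only real bookkeeping point — not so much an obstacle — is tracking the inverse on $\res_c(\theta_2)$ in the definition of $\Delta^{(\alpha,\beta)}$: the $\alpha^{-1}$ inside the arguments is precisely what makes $\res$ intertwine the multiplication in $T(G)$ with the action on $B$, and this proposition is exactly the assertion that it does.
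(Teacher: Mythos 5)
Your proof is correct and takes essentially the same route as the paper's: both verify the criterion (\ref{iso condition}) for $\theta_1\theta_2$ by expanding $(xy)^{\theta_1}$ via $\circ_1$, applying $\theta_2$, using that $\Delta_1(x^{\theta_1},y^{\theta_1})$ lies in $G'=Z(G)$ (so the $\circ_2$-product with it is the ordinary product and its $\theta_2$-image is given by $\res_z(\theta_2)$), and finally identifying the trailing factor as $\Delta_1^{\res(\theta_2)}(x^{\theta_1\theta_2},y^{\theta_1\theta_2})$. If anything, you spell out the two simplifications that the paper leaves implicit.
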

\begin{proof}
Put $\Delta =\Delta_1^{\res(\theta_2)}\Delta_2$, which is explicitly given by
\[ \Delta(x,y) = \Delta_1(x^{\theta_2^{-1}},y^{\theta_2^{-1}})^{\theta_2}\Delta_2(x,y) \mbox{ for all }x,y\in G.\]
Clearly $\theta_1\theta_2$ is a bijection, and we shall use the criterion (\ref{iso condition}) to check that $\theta_1\theta_2$ is a homomorphism. For any $x,y\in G$, we have
\begin{align*}
(xy)^{\theta_1\theta_2} & =(x^{\theta_1}y^{\theta_1}\Delta_1(x^{\theta_1},y^{\theta_1}))^{\theta_2}\\
& = (x^{\theta_1}y^{\theta_1})^{\theta_2} \Delta_1(x^{\theta_1},y^{\theta_1})^{\theta_2}\\
& = x^{\theta_1\theta_2}y^{\theta_1\theta_2}\Delta_2(x^{\theta_1\theta_2},y^{\theta_1\theta_2})\Delta_1(x^{\theta_1},y^{\theta_1})^{\theta_2}\\
& =  x^{\theta_1\theta_2}y^{\theta_1\theta_2}\Delta( x^{\theta_1\theta_2}, y^{\theta_1\theta_2}),
\end{align*}
and the claim now follows.
\end{proof}

Recall that $B = S\times S'$ decomposes as a direct product of the subgroups of symmetric and anti-symmetric forms. Let $\mathcal{S}$ and $\mathcal{S}'$ denote the subsets of $T(G)$ consisting of all the elements $\theta\Hol(G)$ which arise from symmetric and anti-symmetric forms, respectively. In fact, both $\mathcal{S}$ and $\mathcal{S}'$ are subgroups of $T(G)$ in view of Proposition \ref{prop:compare}.

\begin{prop}\label{prop ker}We have $\mathcal{S} = \ker(\res)$. \end{prop}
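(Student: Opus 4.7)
My plan is to prove the two inclusions $\mathcal{S}\subseteq \ker(\res)$ and $\ker(\res)\subseteq \mathcal{S}$ separately. Both directions will rely on exhibiting a specific isomorphism $\theta:G\rightarrow(G,\circ)$ associated with an element of $T(G)$ and analysing what it does modulo $G'$ and on $G'$ itself. Note that $\res$ is well-defined by the standing hypothesis $\Aut(G)=\Aut_c(G)=\Aut_z(G)$, so any two choices of such $\theta$ will yield the same pair $(\res_c(\theta),\res_z(\theta))$.

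For the inclusion $\mathcal{S}\subseteq\ker(\res)$, I would take a symmetric $\Delta\in S$ with associated operation $\circ$ and apply Proposition~\ref{slice in B} with $\Delta_1$ the trivial form and $\Delta_2=\Delta$. This produces the explicit isomorphism
\[ \theta:G\rightarrow(G,\circ),\quad x^\theta=x\Delta(x,x)^{1/2}. \]
Since $\Delta(x,x)\in G'$, this $\theta$ acts as the identity on $G/G'$, so $\res_c(\theta)=\id$. Moreover, $\Delta$ factors through $G/G'\times G/G'$, hence $\Delta(x,x)=1$ for every $x\in G'$, giving $\theta(x)=x$ on $G'$ and so $\res_z(\theta)=\id$.

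For the inclusion $\ker(\res)\subseteq\mathcal{S}$, I would start from an element $\theta\Hol(G)\in\ker(\res)$ with corresponding $\Delta\in B$. Since both $\res_c(\theta)$ and $\res_z(\theta)$ are the identity, I can write $\theta(x)=x\phi(x)$ for a function $\phi:G\rightarrow G'$ which vanishes on $G'$. Unwinding the isomorphism condition $(xy)^\theta=x^\theta\circ y^\theta$, using $\phi(x),\phi(y)\in G'=Z(G)$ and that $\Delta$ factors through $G/G'\times G/G'$, I obtain the cocycle identity
\[ \phi(xy)=\phi(x)\phi(y)\Delta(x,y)\mbox{ for all }x,y\in G. \]
Swapping $x$ and $y$ gives $\phi(yx)=\phi(x)\phi(y)\Delta(y,x)$. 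On the other hand, applying the same cocycle identity to the factorisation $xy=yx\cdot[x,y]$, together with $\phi([x,y])=1$ and $\Delta(yx,[x,y])=1$ (both valid since $[x,y]\in G'$), yields $\phi(xy)=\phi(yx)$. Comparing the two expressions forces $\Delta(x,y)=\Delta(y,x)$, so $\Delta$ is symmetric and $\theta\Hol(G)\in\mathcal{S}$.

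The step I expect to be most delicate is the second inclusion, and specifically the cocycle trick: once one recognises that $\phi$ satisfies a multiplicative cocycle relation with ``cocycle'' $\Delta$, the symmetry of $\Delta$ will fall out by computing $\phi(xy)$ in two ways, the point being that $\phi(xy)=\phi(yx)$ is forced by $[x,y]\in G'$, on which $\phi$ vanishes and on which $\Delta$ is trivial in either slot.
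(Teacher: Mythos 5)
Your proof is correct, and your forward inclusion $\mathcal{S}\subseteq\ker(\res)$ is exactly the paper's argument: take the explicit isomorphism $x^\theta = x\Delta(x,x)^{1/2}$ supplied by the proof of Proposition~\ref{slice in B} and observe that it is trivial both modulo $G'$ and on $G'$. Where you diverge is the reverse inclusion. The paper's version is a short commutator comparison: if $\theta\Hol(G)\in\ker(\res)$, then since $\theta$ is an isomorphism acting trivially on $G'$ and modulo $G'$ (and $G'\leq Z(G,\circ)$), one gets $[x,y] = [x,y]^\theta = [x^\theta,y^\theta]_\circ = [x,y]_\circ$ for all $x,y\in G$, and symmetry of $\Delta$ then drops out of the already-established formula (\ref{commutator}), namely $[x,y]_\circ = [x,y]\Delta(x,y)\Delta(y,x)^{-1}$. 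You instead write $x^\theta = x\phi(x)$ with $\phi:G\to G'$ vanishing on $G'$, extract from (\ref{iso condition}) the cocycle identity $\phi(xy)=\phi(x)\phi(y)\Delta(x,y)$, and force symmetry from $\phi(xy)=\phi(yx)$, justified by applying the identity to the factorisation $xy=yx\cdot[x,y]$ and using that both $\phi$ and $\Delta$ are blind to $G'$; all of these steps check out. This is a genuine, self-contained alternative: it never invokes (\ref{commutator}), and in effect it re-derives the single instance of that formula that is needed, since your two evaluations of $\phi(xy)$ amount to computing the discrepancy between $[x,y]_\circ$ and $[x,y]$. What the paper's route buys is brevity, by reusing the computation from Section~\ref{sec:iso class}; what yours buys is independence from that computation and an explicit view of the mechanism --- $\Delta$ appears as the multiplicative defect of the central $1$-cochain $\phi$, and symmetry is forced because such a defect cannot distinguish $xy$ from $yx$.
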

\begin{proof} Let $\theta\Hol(G) \in T(G)$, where $\theta:G\rightarrow (G,\circ)$ is any isomorphism for the operation $\circ$ defined by the corresponding $\Delta\in B$. 

If $\Delta$ is symmetric, then we may take $\theta$ to be
\[x^\theta = x\Delta(x,x)^{1/2}\mbox{ for all }x\in G\]
by the proof of Proposition \ref{slice in B}, and clearly $\theta\Hol(G)\in \ker(\res)$.

If $\theta\Hol(G)\in\ker(\res)$, then
\[ [x,y]= [x,y]^\theta = [x^\theta,y^\theta]_\circ = [x,y]_\circ\mbox{ for all }x,y\in G,\]
and thus $\Delta$ is symmetric by (\ref{commutator}). 
 \end{proof}

Although $B = S\times S'$ decomposes as a direct product, it is not true in general that $T(G) = \mathcal{S}\times \mathcal{S}'$. But we do have a semidirect product.
 
\begin{proposition}\label{prop:semidirect} We have the semidirect product decomposition
\[ T(G) = \mathcal{S}\rtimes \mathcal{S}'.\]
\end{proposition}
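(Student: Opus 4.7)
The plan is to verify three things: first that $\mathcal{S}$ is normal in $T(G)$; second that $\mathcal{S}\cap\mathcal{S}'=1$; and third that every element of $T(G)$ lies in the product $\mathcal{S}\mathcal{S}'$. Normality of $\mathcal{S}$ is immediate from Proposition \ref{prop ker}, which identifies $\mathcal{S}$ with $\ker(\res)$, and kernels are normal.

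For the trivial intersection, I would take $\theta\Hol(G)\in \mathcal{S}\cap \mathcal{S}'$ with associated form $\Delta\in S\cap S'$. Then $\Delta(x,y)=\Delta(y,x)=\Delta(x,y)^{-1}$, so $\Delta(x,y)^2=1$ in $G'$. Since $G'$ is a $p$-group with $p$ odd, squaring is bijective on $G'$, forcing $\Delta=1$. The circle operation then coincides with that of $G$, so one may take $\theta=\id_G$, and the class $\theta\Hol(G)$ is trivial.

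For $T(G)=\mathcal{S}\mathcal{S}'$, I would fix $\theta\Hol(G)\in T(G)$ with associated form $\Delta$, and use the decomposition $B=S\times S'$ from Section \ref{sec:iso class} to write $\Delta=\Delta_S\Delta_{S'}$. First I would show that $\mathcal{S}'$ contains an element whose form is $\Delta_{S'}$: applying Proposition \ref{slice in B} with $\Delta_1:=\Delta_{S'}$ and $\Delta_2:=\Delta$ (whose quotient $\Delta_S$ is symmetric) gives $(G,\circ_{S'})\cong (G,\circ)\cong G$, so some $\theta_{S'}\Hol(G)\in \mathcal{S}'$ has form $\Delta_{S'}$. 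Next, via Proposition \ref{prop:compare}, the product $\theta_S\theta_{S'}\Hol(G)$ corresponds to the form $(\Delta_S^{(0)})^{\res(\theta_{S'})}\Delta_{S'}$, where $\Delta_S^{(0)}$ is the form of $\theta_S$. Matching this with $\Delta_S\Delta_{S'}$ forces $\Delta_S^{(0)}=\Delta_S^{\res(\theta_{S'})^{-1}}$. A direct check shows that the action of $\Aut(G/G')\times\Aut(G')$ on $B$ stabilizes the subspace $S$ of symmetric forms, so the prescribed $\Delta_S^{(0)}$ is itself symmetric; Corollary \ref{sym cor} then produces the required $\theta_S\Hol(G)\in \mathcal{S}$.

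The main obstacle, as I see it, is the bookkeeping in the third step: one has to translate products of cosets in $T(G)$ into products of bilinear forms (keeping track of the twist by $\res$ supplied by Proposition \ref{prop:compare}) and verify that this $\res$-action preserves symmetry before invoking Corollary \ref{sym cor}. None of the ingredients is deep, but they must be aligned carefully for the semidirect product structure to drop out cleanly.
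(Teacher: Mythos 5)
Your proof is correct and takes essentially the same route as the paper's: normality via $\mathcal{S}=\ker(\res)$, trivial intersection via uniqueness of the associated bilinear form, and surjectivity by splitting $\Delta$ in $B=S\times S'$ and twisting the symmetric factor by $\res(\theta_{S'})^{-1}$ (the paper's $\Delta_1=\Delta_0^{-\res(\theta_2)^{-1}}$, whose symmetry the paper likewise uses) before invoking Proposition~\ref{slice in B}, Corollary~\ref{sym cor}, and Proposition~\ref{prop:compare}. The only cosmetic difference is that the paper builds the anti-symmetric representative explicitly by composing $\theta$ with $x\mapsto x\Delta_0(x,x)^{1/2}$, whereas you obtain it abstractly from Proposition~\ref{slice in B}.
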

\begin{proof}
Let $\theta\Hol(G) \in T(G)$, where $\theta:G\rightarrow (G,\circ)$ is any isomorphism for the operation $\circ$ defined by the corresponding $\Delta\in B$. By Theorem \ref{thm:bilinear forms}, the choice of $\Delta\in B$ is unique. 

If $\theta\Hol(G)\in \mathcal{S}\cap\mathcal{S}'$, then $\Delta$ must be the trivial bilinear form, and $\theta\in \Aut(G)$ because $\circ$ coincides with the operation of $G$ in this case.

The above shows that $\mathcal{S}\cap \mathcal{S}' = 1$. Since $\mathcal{S}$ is a normal subgroup of $T(G)$ by  Proposition \ref{prop ker}, it remains to show that $T(G) = \mathcal{S}\mathcal{S}'$.

Since $B = S\times S'$, we may decompose
\[\Delta = \Delta_0^{-1}\Delta_2,\mbox{ where }\Delta_0\in S\mbox{ and }\Delta_2\in S'.\]
Let $\circ_2$ denote the operation corresponding to $\Delta_2$. Since $\Delta^{-1}\Delta_2 =\Delta_0$ is symmetric, by the proof of Proposition \ref{slice in B}, we know that
\[ (G,\circ)\rightarrow (G,\circ_2);\,\ x\mapsto x\Delta_0(x,x)^{1/2}\]
is an isomorphism. Composing with $\theta$, this implies that
\[ \theta_2 : G\rightarrow (G,\circ_2);\,\ x^{\theta_2} = x^\theta\Delta_0(x^\theta,x^\theta)^{1/2}\]
is an isomorphism. Put
\[ \Delta_1 = \Delta_0^{-\res_1(\theta_2)^{-1}},\mbox{ which clearly lies in }S,\]
and by Proposition \ref{slice in B}, we know that
\[ \theta_1 : G\rightarrow (G,\circ_1); \,\ x^{\theta_1} = x\Delta_1(x,x)^{1/2} \]
is an isomorphism for the operation $\circ_1$ corresponding to $\Delta_1$. We have
\[ \theta\Hol(G) = \theta_1\theta_2\Hol(G) =  \theta_1\Hol(G) \cdot \theta_2\Hol(G)\in \mathcal{S}\mathcal{S}'\]
by Proposition \ref{prop:compare} because 
\[ \Delta = \Delta_0^{-1}\Delta_2 = \Delta_1^{\res(\theta_2)}\Delta_2,\]
and this completes the proof.
\end{proof}

Let us compute the conjugation action of $\mathcal{S}'$ on $\mathcal{S}$. Consider $\Delta\in S$ and $\Delta'\in S'$, with corresponding operations $\circ$ and $\circ'$ respectively, such that there exist isomorphisms
\[ \theta:G\rightarrow (G,\circ)\mbox{ and }\theta':G\rightarrow (G,\circ').\]
By Proposition \ref{prop:compare}, the bilinear forms corresponding to
\[ \theta'^{-1}\Hol(G)\mbox{ and }\theta\theta'\Hol(G),\]
respectively, are equal to
\[(\Delta')^{-\res(\theta'^{-1})}\mbox{ and }\Delta^{\res(\theta')}\Delta'.\]
It follows that the bilinear form corresponding to
\[ \theta'^{-1}\theta \theta'\Hol(G) = \theta'^{-1}\Hol(G) \cdot \theta \theta'\Hol(G)\]
is in turn given by
\[ (\Delta')^{-\res(\theta'^{-1})\res(\theta\theta')}\cdot  (\Delta^{\res(\theta')}\Delta') = \Delta^{\res(\theta')},\]
where the equality holds because $\res(\theta)=1$ by Proposition \ref{prop ker} and $B$ is an abelian group. This means that the action of $\mathcal{S}'$ on $\mathcal{S}$ agrees with that of $\res(\mathcal{S}')$ on $S$. Summarizing, we obtain the following theorem:

\begin{theorem}\label{thm:T(G)} We have an isomorphism
\[ T(G) \simeq S\rtimes \res(\mathcal{S}'),\]
\end{theorem}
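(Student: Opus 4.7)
The plan is to assemble the theorem from the machinery already in place. Proposition \ref{prop:semidirect} gives $T(G)=\mathcal{S}\rtimes\mathcal{S}'$, and the calculation carried out in the paragraph immediately before the theorem statement shows that the conjugation action of $\mathcal{S}'$ on $\mathcal{S}$, after translating cosets to their bilinear forms, agrees with the natural action of $\res(\mathcal{S}')$ on $S$. So it suffices to exhibit natural isomorphisms $\mathcal{S}\simeq S$ and $\mathcal{S}'\simeq\res(\mathcal{S}')$ that are compatible with these actions; the three pieces will then assemble to give the claimed isomorphism $T(G)\simeq S\rtimes\res(\mathcal{S}')$.

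For the first isomorphism, I would define $\Phi:\mathcal{S}\to S$ by sending a coset $\theta\Hol(G)$ to the bilinear form $\Delta$ attached by Theorem \ref{thm:bilinear forms} to the normal regular subgroup $N=\rho(G)^\theta$. This is independent of the chosen representative (since $N$ is), takes values in $S$ by the definition of $\mathcal{S}$, and is a homomorphism by Proposition \ref{prop:compare}: for $\theta_1,\theta_2\in\mathcal{S}$ we have $\res(\theta_2)=1$ by Proposition \ref{prop ker}, so the twist disappears and $\Delta_1^{\res(\theta_2)}\Delta_2=\Delta_1\Delta_2$. Injectivity: if $\Delta$ is trivial, then $\circ$ coincides with the original operation, so $N=\rho(G)$ and $\theta\in\Hol(G)$. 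Surjectivity: given $\Delta\in S$, Corollary \ref{sym cor} furnishes an abstract isomorphism $\theta:G\to(G,\circ)$, and Proposition \ref{prop:theta} together with the identification of $\mathcal{H}(G)$ with the normal regular subgroups of $\Hol(G)$ lets me realise some such $\theta$ inside $\NHol(G)$ with $1^\theta=1$.

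For the second isomorphism, the restriction $\res|_{\mathcal{S}'}:\mathcal{S}'\to\res(\mathcal{S}')$ is tautologically surjective, and its kernel is $\mathcal{S}'\cap\ker(\res)=\mathcal{S}'\cap\mathcal{S}=1$ by Propositions \ref{prop ker} and \ref{prop:semidirect}. Transporting the conjugation action of $\mathcal{S}'$ on $\mathcal{S}$ through $\Phi$ and $\res|_{\mathcal{S}'}$ then yields exactly the action of $\res(\mathcal{S}')$ on $S$ already computed, so the semidirect product structure is preserved. The only slightly delicate step is the surjectivity of $\Phi$: one has to pass from an abstract group isomorphism $\theta:G\to(G,\circ)$ to a concrete representative lying in $\NHol(G)$, and I expect this to be where Proposition \ref{prop:theta} and the equivalent description of $\mathcal{H}(G)$ are indispensable. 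Everything else is routine bookkeeping with the correspondences established earlier.
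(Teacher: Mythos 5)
Your proposal is correct and takes essentially the same route as the paper's own proof: the paper likewise obtains $\mathcal{S}\simeq S$ from Corollary~\ref{sym cor} together with Propositions~\ref{prop ker} and~\ref{prop:compare}, gets $\mathcal{S}'\simeq\res(\mathcal{S}')$ from the trivial intersection $\mathcal{S}\cap\mathcal{S}'=1$, and assembles these via Proposition~\ref{prop:semidirect} and the conjugation-action computation preceding the theorem. Your write-up simply makes explicit some bookkeeping (well-definedness, injectivity, and the surjectivity of your map $\Phi$ via Proposition~\ref{prop:theta} and the description of $\mathcal{H}(G)$) that the paper leaves implicit.
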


\begin{proof}Recall from Corollary \ref{sym cor} that every symmetric form gives rise to an element of $T(G)$. Since $\mathcal{S} = \ker(\res)$ by Proposition \ref{prop ker}, we see from Proposition \ref{prop:compare} that $ \mathcal{S} \simeq S$. Since $\mathcal{S}$ intersects trivially with $\mathcal{S}'$, we also have $\mathcal{S}'\simeq \res(\mathcal{S}')$. The claim now follows from Proposition \ref{prop:semidirect} and the above calculation.
\end{proof}

\section{A special family of finite $p$-groups of class two}\label{sec:special}

In this last section, we shall consider the finite $p$-groups of class two constructed in \cite{special_groups}. Specifically, let $n\geq 4$ and consider
\begin{align*}
 G  =\Bigg \langle x_1,x_2,\dots,x_n :&\, [[x_i,x_j],x_k]=1\mbox{ for all }1\leq i,j,k\leq n,\,\\
 &\, x_i^p = \prod_{j<k}[x_j,x_k]^{d_{i,(j,k)}}\mbox{ for all }1\leq i\leq n\Bigg\rangle.
 \end{align*}
This group has order $p^{n+{n\choose 2}}$ and has the following properties:
\begin{enumerate}[label=(\alph*)]
\item $G' = Z(G) =\mathrm{Frat}(G)$;
\item $G/G'$ is elementary abelian of order $p^{n}$ having
\begin{equation}\label{basis1}
x_1G',x_2G',\dots,x_nG'
\end{equation}
as an $\mathbb{F}_p$-basis;
\item $G'$ is elementary abelian of order $p^{n\choose 2}$ having
\begin{equation}\label{basis2}
[x_j,x_k]\mbox{ with }1\leq j < k \leq n
\end{equation}
as an $\mathbb{F}_p$-basis.
\end{enumerate}
Since $G$ has class two and $G'$ has exponent $p$, the $p$th power map
\[ \pi : G/G' \rightarrow G';\,\ (xG')^\pi = x^p\]
is a well-defined homomorphism, and the $n\times {n\choose 2}$ matrix
\[ (d_{i,(j,k)}),\mbox{ where }1\leq i\leq n\mbox{ and }1\leq j < k \leq n\]
is precisely the matrix of $\pi$ with respect to the bases (\ref{basis1}) and (\ref{basis2}).

As shown in \cite{special_groups}, we may choose $(d_{i,(j,k)})$ in a way such that 
\[\Aut(G) =\Aut_c(G),\]
so that the discussion in Section \ref{sec:group structure} applies. In particular, we have
\[ T(G) \simeq S \rtimes \res(\mathcal{S}')\]
by Theorem \ref{thm:T(G)}. For the symmetric part $S$, it is clear that
\begin{equation}\label{S}
S\simeq \mathbb{F}_p^{{n\choose 2} {n+1\choose2 }}.
\end{equation}
For the anti-symmetric part $\res(\mathcal{S}')$, there is at least a cyclic subgroup of order $p-1$ by Example \ref{example:power maps'}. This shows that $T(G)$ has a subgroup isomorphic to the semidirect product
\[\mathbb{F}_p^{{n\choose 2} {n+1\choose2 }}\rtimes \mathbb{F}_p^\times.\]
This is the precisely the content of \cite[Theorem 5.5]{class_two}, which deals with the same family of groups we are considering here. 

However, the anti-symmetric part $\res(\mathcal{S}')$, which is a subgroup of 
\[ 
 \Aut(G/G') \times \Aut(G') \simeq \GL_n(\mathbb{F}_p)\times \GL_{n\choose 2}(\mathbb{F}_p), 
\]
can potentially be much larger than $\mathbb{F}_p^\times$. We shall investigate its structure in more detail in the subsequent subsections. For now, let us just show that the anti-symmetric forms are precisely those from Example \ref{example:sigma} for the groups $G$ under consideration.
 
 \begin{prop}\label{prop:anti}We have $S' = \{\Delta_\sigma: \sigma\in \End(G')\}$.
 \end{prop}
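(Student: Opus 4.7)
The plan is to prove the two inclusions separately, with the non-trivial direction being an application of linear algebra on the explicit bases of $G/G'$ and $G'$ given in properties (b) and (c).

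For the inclusion $\{\Delta_\sigma : \sigma \in \End(G')\} \subseteq S'$, I would simply note that since $G$ has class two, the commutator $(x,y) \mapsto [x,y]$ induces an $\mathbb{F}_p$-bilinear anti-symmetric map $G/G' \times G/G' \to G'$, and composing with any $\mathbb{F}_p$-linear $\sigma : G' \to G'$ preserves bilinearity and anti-symmetry. The criterion \eqref{eqn:bilinear} in Example \ref{example:sigma} is vacuous here, because the standing assumption $\Aut(G) = \Aut_c(G)$ implies $\Aut(G) = \Aut_z(G)$ (so $\overline{\beta} = 1$ for every $\beta \in \Aut(G)$), hence $\sigma\overline{\beta} = \overline{\beta}\sigma$ holds trivially. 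So $\Delta_\sigma \in S'$.

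For the reverse inclusion, the crucial input is property (c): the set $\{[x_j, x_k] : 1 \leq j < k \leq n\}$ is an $\mathbb{F}_p$-basis of $G'$. Given an arbitrary $\Delta \in S'$, I would define $\sigma \in \End(G')$ on this basis by
\[ [x_j, x_k]^\sigma := \Delta(x_j, x_k) \qquad (1 \leq j < k \leq n) \]
and extend $\mathbb{F}_p$-linearly; this uniquely determines a linear endomorphism of $G'$. It remains to verify $\Delta = \Delta_\sigma$. Since both sides are $\mathbb{F}_p$-bilinear forms on the vector space $G/G'$ with basis $\{x_1 G', \dots, x_n G'\}$ (property (b)), it suffices to check the equality $\Delta(x_i, x_j) = [x_i, x_j]^\sigma$ for all pairs $(i,j)$. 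When $i = j$, anti-symmetry forces both sides to be trivial; when $i < j$, equality holds by construction of $\sigma$; when $i > j$, both $\Delta$ and $(x,y) \mapsto [x,y]^\sigma$ are anti-symmetric, so the equality is inherited from the $i < j$ case.

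There is no real obstacle to overcome: the entire argument rests on the fortunate fact that $\dim_{\mathbb{F}_p} G' = \binom{n}{2} = \dim_{\mathbb{F}_p} \bigwedge^2(G/G')$ for this family, so the commutator map induces an isomorphism $\bigwedge^2(G/G') \xrightarrow{\sim} G'$, allowing every anti-symmetric form $\Delta$ to be uniquely rewritten as $[\,,\,]^\sigma$ for a linear $\sigma$. This is precisely the linear-algebra reduction that was advertised in the introduction to Section \ref{sec:special}.
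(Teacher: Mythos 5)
Your proof is correct and takes essentially the same route as the paper: the paper identifies $G'$ with the exterior square of $G/G'$ via $[x,y]\mapsto x\wedge y$ and invokes the universal property of the exterior square to produce $\sigma$, which is exactly what your explicit definition of $\sigma$ on the basis $\{[x_j,x_k]\}_{j<k}$ and the subsequent check on basis pairs amount to. The only cosmetic difference is that you unpack the universal property by hand (and your $i=j$ case tacitly uses that $\Delta(x_i,x_i)^2=1$ forces $\Delta(x_i,x_i)=1$ because $p$ is odd), which the paper leaves implicit.
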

\begin{proof}Note that we may identify $G'$ with the exterior square of $G/G'$ by associating each commutator $[x,y]$ to the wedge product $x\wedge y$. For any anti-symmetric form $\Delta\in S'$, by the universal property of the exterior square, we then see that there exists $\sigma\in \End(G')$ such that
\[ \begin{tikzcd}[column sep = 2.5cm, row sep = 2cm]
\dfrac{G}{G'}\times \dfrac{G}{G'}\arrow[swap]{d}{[\,\ ,\,\ ]} \arrow{r}{\Delta} & G'\\
G'\arrow[dotted,swap]{ru}{\sigma}
\end{tikzcd}\]
commutes, meaning that $\Delta = \Delta_\sigma$. This proves the claim since the $\Delta_\sigma$ are clearly all anti-symmetric.
\end{proof}

\subsection{The circle group of anti-symmetric forms}

Let $\sigma\in \End(G')$ and consider the $\Delta_{\sigma} \in S'$ defined in Example \ref{example:sigma}. Let 
\[ x\circ y =x\circ_\sigma y = xy\Delta_\sigma(x,y) = xy[x,y]^\sigma\]
denote the corresponding operation on $G$. We wish to know when the circle group $(G,\circ)$ is isomorphic to $G$. We may assume that 
\[1+2\sigma\in \Aut(G'),\]
for otherwise $(G,\circ)$ is not isomorphic to $G$ by Example \ref{example:sigma'}. Then
\[(G,\circ)' = G' = Z(G) = Z(G,\circ)\]
as noted in (\ref{sigma eqn}). Although $(G,\circ)$ need not be isomorphic to $G$, it still admits a presentation that is analogous to $G$, as follows.

First of all, the following are immediate from the definition of $\circ$. In fact, we see from (\ref{eqn:circ}) that (1) and (2) hold more generally, while (3) holds simply because $\Delta_\sigma$ is anti-symmetric.
\begin{enumerate}[label=(\arabic*)]
\item For all $x,y\in G$, we have $x\circ y \equiv xy \pmod{G'}$.
\item For all $x,y\in G'$, we have $x\circ y = xy$.
\item For all $x\in G$ and $k\in\mathbb{Z}$, we have $x^{\circ k} = x^{k}$, where $x^{\circ k}$ denotes the $k$th power of $x$ in the group $(G,\circ)$.
\end{enumerate}
This implies that there is no need to distinguish the operation $\circ$ from that of $G$ on the quotient $G/G'$, on the subgroup $G'$, and for arbitrary powers of elements of $G$.

Now, from (\ref{commutator}) commutators in $(G,\circ)$ are given by
\begin{equation}\label{circ comm}
[x,y]_\circ = [x,y]\Delta_\sigma(x,y)\Delta_\sigma(y,x)^{-1} = [x,y]^{1+2\sigma},\end{equation}
and we also know that $(G,\circ)$ has class two. It then follows that
\begin{align*}
 (G,\circ)  =\Bigg \langle x_1,x_2,\dots,x_n : &\, [[x_i,x_j]_\circ,x_k]_\circ=1\mbox{ for all }1\leq i,j,k\leq n,\,\\
 &\, x_i^p = \prod_{j<k}[x_j,x_k]_\circ^{d^\circ_{i,(j,k)}}\mbox{ for all }1\leq i\leq n\Bigg\rangle,
 \end{align*}
 where explicitly, the $n\times {n\choose 2}$ matrix 
\[ (d_{i,(j,k)}^\circ),\mbox{ where $1\leq i\leq n$ and $1\leq j<k\leq n$}\]
is the matrix of $\pi(1+2\sigma)^{-1}$ with respect to the bases (\ref{basis1}) and (\ref{basis2}). The relations clearly hold in $(G,\circ)$, and we have equality because the presentation on the right defines a group of the same order as $(G,\circ)$.

Since $G'$ may be identified with the exterior square of $G/G'$ via the association $[x,y]\mapsto x\wedge y$, each $\alpha\in \Aut(G/G')$ induces
\[\widehat{\alpha}\in \Aut(G')\mbox{ via } [x,y]^{\widehat{\alpha}}= [x^\alpha,y^\alpha]\mbox{ for all }x,y\in G.\]
Using this notation and (\ref{res}), we can prove:

\begin{prop}\label{prop:criterion}Let $\alpha\in \Aut(G/G')$. The following are equivalent.
\begin{enumerate}[label=\emph{(\arabic*)}]
\item
  There is an isomorphism $\theta: G\rightarrow (G,\circ)$ with $\res_c(\theta) = \alpha$.
\item The equality $\alpha^{-1}\pi\widehat{\alpha} = \pi(1+2\sigma)^{-1}$ holds. 
\end{enumerate}
Moreover, in this case, we have 
\[\res(\theta) = (\res_c(\theta),\res_z(\theta)) =  (\alpha,\widehat{\alpha}(1+2\sigma)).\] 
\end{prop}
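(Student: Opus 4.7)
The plan is to exploit the presentation of $G$: since $G$ is generated by $x_1,\dots,x_n$ subject to the class-two and $p$-th power relations listed in the construction, defining a homomorphism out of $G$ reduces to specifying the images of the $x_i$ and checking that the relations are preserved. I would take $\theta : G \to (G,\circ)$ to be any map sending each $x_i$ to some lift $y_i \in G$ of $\alpha(x_i G') \in G/G'$. The triple-commutator relations $[[x_i,x_j],x_k]=1$ automatically translate to the corresponding relations for $\circ$-commutators because $(G,\circ)$ also has class two (see (\ref{inclusions})), so the substantive condition is the $p$-th power relations.

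To translate these relations I would invoke properties (1)--(3) immediately preceding (\ref{circ comm}). Property (3) says $\circ$-powers equal ordinary powers, so $y_i^{\circ p}=y_i^p$. Because $G$ has class two and $G'$ has exponent $p$, the class-two power expansion combined with $p\mid\binom{p}{2}$ shows that $y_i^p$ depends only on the coset $y_iG' = \alpha(x_iG')$, whence $y_i^p = \pi(\alpha(x_iG'))$. For the right-hand side, the formula $[a,b]_\circ = [a,b]^{1+2\sigma}$ from (\ref{circ comm}) and the identity $[y_j,y_k] = [x_j^\alpha,x_k^\alpha] = [x_j,x_k]^{\widehat{\alpha}}$ (which uses $G'\leq Z(G)$ and the definition of $\widehat{\alpha}$) together give, writing $G'$ additively, a right-hand side equal to $(1+2\sigma)\widehat{\alpha}(\pi(x_iG'))$. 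Thus the power relations are preserved if and only if
\[\pi(\alpha(x_iG')) = (1+2\sigma)\bigl(\widehat{\alpha}(\pi(x_iG'))\bigr)\quad\text{for all }i,\]
which is the equality $\alpha\pi = \pi\widehat{\alpha}(1+2\sigma)$ of maps $G/G'\to G'$ and, after rearranging, is exactly condition~(2).

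For the converse direction, any isomorphism $\theta$ with $\res_c(\theta)=\alpha$ must satisfy $\theta(x_i)G' = \alpha(x_iG')$, so the same computation forces (2). Once (2) is known, $\theta$ is a well-defined homomorphism; the induced map on $G/G'$ is $\alpha$ by construction, and the induced map on $G'$ is identified by
\[\theta([x_j,x_k]) = [x_j^\theta,x_k^\theta]_\circ = [x_j,x_k]^{\widehat{\alpha}(1+2\sigma)},\]
using the same two identities, so both induced maps are bijections and $\theta$ is therefore an isomorphism with $\res(\theta) = (\alpha,\widehat{\alpha}(1+2\sigma))$. The main hurdle is bookkeeping: keeping composition conventions straight (the paper composes maps from left to right), and checking at each step that the computation really factors through the cosets modulo $G'$ so that it does not depend on the choice of lifts $y_i$ — both points ultimately rest on the fact that $G'\leq Z(G)$ has exponent $p$.
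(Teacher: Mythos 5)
Your proposal is correct and is in substance the same proof as the paper's: the paper likewise reduces everything to checking the defining relations, observing that the triple-commutator relations are automatic because $(G,\circ)$ has class two, that preservation of the $p$-th power relations (using $x^{\circ k}=x^k$, $[y_j,y_k]_\circ=[x_j,x_k]^{\widehat{\alpha}(1+2\sigma)}$, and the fact that everything factors through cosets mod $G'$) is equivalent to the operator identity $\alpha^{-1}\pi\widehat{\alpha}=\pi(1+2\sigma)^{-1}$, and that the induced map on $G'$ is $\widehat{\alpha}(1+2\sigma)$. The only difference is one of formalization: where you invoke von Dyck's theorem directly on the stated presentation (taking care, as you note, that the relation check is independent of the chosen lifts $y_i$), the paper makes the same step rigorous by lifting $\alpha$ to an automorphism $\widetilde{\alpha}$ of the relatively free group $F$ of class two, exponent $p^2$, with central $p$-th powers, and checking the kernel containment $\ker(\psi)\leq\ker(\widetilde{\alpha}\psi^\circ)$.
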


\begin{remark}
    In the case that $\sigma=0$ is the trivial endomorphism, that is, when $(G,\circ) = G$, this  linear criterion has been
    previously employed in the literature to determine the subgroup of
    $\Aut(G/G')$ induced by $\Aut(G)$. It was introduced
    in~\cite{DH75} and was later used among others in~\cite{Car83, special_groups}.
\end{remark}

\begin{proof}Let us represent $\alpha$ as an $n\times n$ matrix
\[ (a_{ij}),\mbox{ where }1\leq i,j\leq n\]
with respect to the basis (\ref{basis1}). Letting
\[ \widehat{a}_{(j,k),(s,t)} = a_{js}a_{kt} - a_{jt}a_{ks},\]
we see that $\widehat{\alpha}$ is represented by the ${n\choose 2}\times{n\choose 2}$ matrix
\[  (\widehat{a}_{(j,k),(s,t)}),\mbox{ where }1\leq j<k\leq n\mbox{ and }1\leq s<t\leq n\]
with respect to the basis (\ref{basis2}). We may rewrite the equality in (2) as 
\begin{equation}\label{matrices} (a_{ij})(d_{i,(j,k)}^\circ) = (d_{i,(j,k)})(\widehat{a}_{(j,k),(s,t)}) \end{equation}
in terms of matrices with respect to the bases (\ref{basis1}) and (\ref{basis2}).

Let $F$ be the free group on $n$ generators $f_1,f_2,\dots,f_n$ in the variety of $p$-groups of class two and exponent $p^2$ in which all $p$th powers are central, namely the variety defined by 
\[ [[F,F],F],\,\ F^{p^2},\,\ [F^p,F].\]
By sending each $f_i$ to $x_i$, we obtain surjective homomorphisms
\[ \psi : F\rightarrow G\mbox{ and }
\psi^\circ : F\rightarrow (G,\circ).\]
We may also define a homomorphism $\widetilde{\alpha} : F\rightarrow F$ by extending
\[f_i^{\widetilde{\alpha}} = f_1^{\widetilde{a_{i1}}}f_2^{\widetilde{a_{i2}}}\cdots f_n^{\widetilde{a_{in}}}\mbox{ for all }1\leq i\leq n,\]
where $\widetilde{a_{ij}} \in \mathbb{Z}$ denotes a fixed lift of $a_{ij}\in \mathbb{F}_p$. Since $\alpha$ is invertible, the image of $\widetilde{\alpha}$ contains $f_1,f_2,\dots,f_n$ modulo $F^p$ and in particular modulo $\mathrm{Frat}(F)$. Since $\mathrm{Frat}(F)$ is contained in every maximal subgroup of $G$, this implies that $\widetilde{\alpha}$ is surjective. Since $F$ is finite, we deduce that $\widetilde{\alpha}$ is an isomorphism.

We summarize the above set-up in the following diagram:
\[ \begin{tikzcd}[column sep = 2.5cm, row sep = 1.75cm]
\ker(\psi)  \arrow[hook,swap]{r} & F\arrow[two heads]{rd}{\widetilde{\alpha}\psi^\circ}\arrow{d}{\simeq}\arrow[swap]{d}{\widetilde{\alpha}}  \arrow[two heads]{r}{\psi}& G \arrow[dotted, swap]{d}{\simeq}\arrow[dotted]{d}{\theta}\\
\ker(\psi^\circ)\arrow[hook]{r} & F\arrow[swap, two heads]{r}{\psi^\circ} & (G,\circ)
\end{tikzcd}\]
The desired map in~(1) is simply any homomorphism
$\theta: G\rightarrow (G,\circ)$ for which the above diagram
commutes. Note that $\theta$ is necessarily an isomorphism because
$\widetilde{\alpha}\psi^\circ$ is surjective. 

Now, such a homomorphism $\theta: G\rightarrow (G,\circ)$ exists if and only if 
\[ \ker(\psi)\leq \ker(\widetilde{\alpha}\psi^\circ),\]
or equivalently
\[ \begin{cases}
[[f_i,f_j],f_k]^{\widetilde{\alpha}\psi^\circ}  = 1 &\mbox{for all }1\leq i,j,k\leq n,\\
\left(f_i^{p}\right)^{\widetilde{\alpha}\psi^\circ} = \Bigg(\prod\limits_{j<k}[f_j,f_k]^{d_{i,(j,k)}}\Bigg)^{\widetilde{\alpha}\psi^\circ} &\mbox{for all $1\leq i\leq n$}.
\end{cases}\] 
The first relation is trivial because $(G,\circ)$ has class two and
\[ [[f_i,f_j],f_k]^{\widetilde{\alpha}\psi^\circ} =[[f_i^{\widetilde{\alpha}},f_j^{\widetilde{\alpha}}],f_k^{\widetilde{\alpha}}]^{\psi^\circ} = [[x_i^\alpha,x_j^\alpha]_\circ,x_k^\alpha]_\circ. \]
For the second relation, since $F$ has class two, $p$th powers are central in $F$, and $Z(F)$ has exponent $p$, the left hand side equals 
\begin{align*}
(f_i^p)^{\widetilde{\alpha}\psi^\circ} & = \left( f_1^{\widetilde{a_{i1}}}f_2^{\widetilde{a_{i2}}}\cdots f_n^{\widetilde{a_{in}}}\right)^{p\psi^\circ}\\
 & =\left( f_1^{pa_{i1}}f_2^{pa_{i2}}\cdots f_n^{pa_{in}}\right)^{\psi^\circ} \\
 & = x_1^{pa_{i1}}\circ x_2^{pa_{i2}}\circ \cdots \circ x_n^{pa_{in}}\\
 &= \prod_{\ell=1}^{n}\left( \prod_{j<k}[x_j,x_k]_\circ^{d^\circ_{\ell,(j,k)}}\right)^{a_{i\ell}}\\
 & = \prod_{j<k}[x_j,x_k]_\circ^{\sum\limits_{\ell=1}^n a_{i\ell}d^\circ_{\ell,(j,k)}},
\end{align*}
while the right hand side equals
 \begin{align*}
  \Bigg(\prod_{j<k}[f_j,f_k]^{d_{i,(j,k)}}\Bigg)^{\widetilde{\alpha}\psi^\circ} 
  & = \Bigg(\prod_{j<k}[f_j^{\widetilde{\alpha}},f_k^{\widetilde{\alpha}}]^{d_{i,(j,k)}}\Bigg)^{\psi^\circ}\\
  & = \left( \prod_{j<k} \prod_{s<t} [f_s,f_t]^{(a_{js}a_{kt} - a_{jt}a_{ks})d_{i,(j,k)}} \right)^{\psi^\circ}\\
  &= \prod_{s<t}\prod_{j<k}[x_j,x_k]_\circ^{(a_{sj}a_{tk}-a_{sk}a_{tj})d_{i,(s,t)}}\\
  & = \prod_{j<k}[x_j,x_k]_\circ^{\sum\limits_{s<t} d_{i,(s,t)}\widehat{a}_{(s,t),(j,k)}}.
 \end{align*}
By (\ref{basis2}) and (\ref{circ comm}), we know that
\[ [x_j,x_k]_\circ,\mbox{ where }1\leq j < k \leq n\]
is also an $\mathbb{F}_p$-basis for $G'$. We then see that the second relation, when $1\leq i\leq n$ is fixed, holds if and only if
 \[ \sum_{\ell=1}^n a_{i\ell}d_{\ell,(j,k)}^\circ = \sum_{s<t} d_{i,(s,t)}\widehat{a}_{(s,t),(j,k)}\mbox{ for all $1\leq j < k\leq n$}.\]
But the sum on the left is the $i,(j,k)$ entry of the matrix on the left of (\ref{matrices}), while that on the right is the $i,(j,k)$ entry of the matrix on the right of (\ref{matrices}).

We have thus shown that the existence of the $\theta : G\rightarrow (G,\circ)$ in (1) is indeed equivalent to the equality in (2). Moreover, in this case, the action of $\theta$ on $G'$ is given by
\[ [x,y]^\theta = [x^\theta,y^\theta]_\circ = [x^\alpha,y^\alpha]^{1+2\sigma} = [x,y]^{\widehat{\alpha}(1+2\sigma)},\]
and hence $\res_z(\theta) = \widehat{\alpha}(1+2\sigma)$, as desired.
\end{proof}

\subsection{Proof of Theorem \ref{thm:main}}

By Propositions \ref{prop:anti} and \ref{prop:criterion}, we have
\begin{align*}
 \res(\mathcal{S}') &= \{ (\alpha,\widehat{\alpha}(1+2\sigma)): \alpha\in\Aut(G/G')\mbox{ and }\sigma\in \End(G')\mbox{ such }\\ &\hspace{2cm}\mbox{that }1+2\sigma\in \Aut(G')\mbox{ and }\alpha^{-1}\pi\widehat{\alpha} = \pi(1+2\sigma)^{-1} \}.
 \end{align*}
Making a change of variables $\tau = 1+2\sigma$, we then obtain
\begin{align}\label{res description}
 \res(\mathcal{S}') &= \{ (\alpha,\widehat{\alpha}\tau): \alpha\in\Aut(G/G')\mbox{ and }\tau\in \Aut(G')\\\notag &\hspace{2cm}\mbox{which satisfy the relation }\alpha^{-1}\pi\widehat{\alpha} = \pi \tau^{-1} \}.
 \end{align}
Recall that $\pi : G/G'\rightarrow G'$ denotes the $p$th power map, and the structure of $\res(\mathcal{S}')$ depends upon $\pi$ by the above description. Since $G$ has class two, the so-called omega subgroup of $G$ equals the set
\[ \Omega_1(G) = \{x\in G : x^p = 1\}\]
consisting of the elements of order dividing $p$. Note that
\begin{align}\label{iff}
 \Omega_1(G)\mbox{ is contained in }G'&\iff\pi \mbox{ is injective} \\\notag
 &\iff (d_{i,(j,k)})\mbox{ has full rank}.\end{align}
 Recall that $(d_{i,(j,k)})$ is the matrix of $\pi$ with respect to the bases (\ref{basis1}) and (\ref{basis2}). The second equivalence here holds because our matrices act on row vectors from the right. In this case, we can prove:

\begin{prop}\label{prop:full rank}Assume that $\Omega_1(G)$ is contained in $G'$. 

We have an isomorphism
\[ \res(\mathcal{S}') \simeq \mathbb{F}_p^{(n'-n)\times n}\rtimes \left( \GL_n(\mathbb{F}_p)\times \GL_{n'-n}(\mathbb{F}_p)\right),\]
where $n' = {n\choose 2}$ and the semidirect product action is the natural one as given in the statement of Theorem \ref{thm:main}.
\end{prop}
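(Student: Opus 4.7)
The plan is to unpack the description of $\res(\mathcal{S}')$ given in \eqref{res description}. Multiplying the defining equation $\alpha^{-1}\pi\widehat{\alpha} = \pi\tau^{-1}$ on the right by $\tau$ and setting $U := \widehat{\alpha}\tau$, so that the pair actually appearing in $\res(\mathcal{S}')$ is $(\alpha, U)$, I would first rewrite
\[
\res(\mathcal{S}') = \{(\alpha, U) \in \Aut(G/G') \times \Aut(G') : \pi U = \alpha \pi\}.
\]
Since $\widehat{\alpha}$ is invertible, any $U$ satisfying this relation comes from the unique $\tau = \widehat{\alpha}^{-1}U \in \Aut(G')$, so nothing is lost in this recasting.

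Next I would exploit the hypothesis $\Omega_1(G) \leq G'$: by \eqref{iff} the map $\pi$ is injective, so $V := \pi(G/G')$ is an $n$-dimensional subspace of the $n'$-dimensional $\mathbb{F}_p$-space $G'$. Choosing a basis of $G'$ whose first $n$ vectors are $\pi(x_1 G'), \ldots, \pi(x_n G')$, the matrix of $\pi$ with respect to \eqref{basis1} and this new basis takes the simple form $D = [\,I_n \mid 0\,]$. Writing the matrix of $U$ in block form with diagonal blocks of sizes $n$ and $n'-n$, the equation $DU = AD$ (with $A$ the matrix of $\alpha$) reduces to
\[
U = \begin{pmatrix} A & 0 \\ P & M \end{pmatrix}
\]
with $P \in \mathbb{F}_p^{(n'-n)\times n}$ arbitrary; the requirement $U \in \Aut(G')$ is then equivalent to $M \in \GL_{n'-n}(\mathbb{F}_p)$. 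This yields a bijection between $\res(\mathcal{S}')$ and $\GL_n(\mathbb{F}_p) \times \mathbb{F}_p^{(n'-n)\times n} \times \GL_{n'-n}(\mathbb{F}_p)$ sending $(\alpha, U)$ to $(A, P, M)$.

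Finally I would compute the induced group law by straightforward block matrix multiplication, obtaining
\[
(A_1, P_1, M_1)(A_2, P_2, M_2) = (A_1 A_2,\; P_1 A_2 + M_1 P_2,\; M_1 M_2),
\]
and then apply the change of coordinates $(A, P, M) \mapsto (M^{-1}P, (A, M))$. A direct check shows that this transforms the operation above into $(Q_1, h_1)(Q_2, h_2) = (Q_1^{h_2} + Q_2,\; h_1 h_2)$ for the right action $Q^{(A,M)} = M^{-1} Q A$, exhibiting precisely the claimed semidirect product. The main subtlety is tracking the left-to-right composition convention and the row-vector action throughout the matrix manipulations, which fixes the order of factors everywhere; once the basis of $G'$ is chosen so that $\pi$ takes the simple form $[\,I_n \mid 0\,]$, the rest reduces to routine block matrix algebra.
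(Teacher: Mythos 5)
Your proof is correct and follows essentially the same route as the paper: choose bases so that $\pi$ is represented by $[\,I_n \mid 0\,]$, solve the resulting matrix equation in block form, and read off the semidirect product structure from block multiplication. The only (welcome, but cosmetic) difference is that by solving directly for $U=\widehat{\alpha}\tau$ via $\pi U = \alpha\pi$ — the component that actually appears in $\res(\mathcal{S}')$ — you bypass the paper's bookkeeping with $\widehat{A}$ and its block-matrix inversion, and your explicit coordinate change $(A,P,M)\mapsto(M^{-1}P,(A,M))$ makes transparent the final identification that the paper leaves implicit.
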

 
 \begin{proof}The assumption implies that $\pi$ is represented by
 \[ D = \begin{bmatrix} 1  &&&& 0 &\cdots & 0\\
 & 1 && &0 & \cdots & \\
 &&\ddots && \vdots & \vdots & \vdots\\
 &&&1 & 0 & \cdots & 0\end{bmatrix}\]
with respect to a suitable choice of bases of $G/G'$ and $G'$. In terms of matrices, we then see from (\ref{res description}) that
 \begin{align*}
  \res(\mathcal{S}') & \simeq \{ (A,\widehat{A}T): A \in \GL_n(\mathbb{F}_p)\mbox{ and }T\in \GL_{n'}(\mathbb{F}_p)\\
  &\hspace{2cm}\mbox{which satisfy the relation }A^{-1}D\widehat{A} = DT^{-1}\}.
 \end{align*}
 Here, with respect to the chosen bases, if $A$ is the matrix representing $\alpha\in \Aut(G/G')$ then $\widehat{A}$ is the matrix representing $\widehat{\alpha}\in \Aut(G')$, while $T$ is the matrix representing $\tau$. 
 
Let us fix an arbitrary $A\in \GL_n(\mathbb{F}_p)$, and we wish to determine all the solutions $T \in \GL_{n'}(\mathbb{F}_p)$ to the equation
\begin{equation}\label{matrix eqn}
A^{-1}D\widehat{A} = DT^{-1},\mbox{ or equivalently }A^{-1}D=DT^{-1}\widehat{A}^{-1}.
\end{equation}
By the shape of the matrix $D$, the left hand side is the block matrix
\[ \begin{bmatrix} A^{-1} & 0\end{bmatrix}.\]
Also, the equation (\ref{matrix eqn}) only imposes restrictions on and uniquely determines the first $n$ rows of $T^{-1}\widehat{A}^{-1}$. In particular, we see that
\[ T^{-1}\widehat{A}^{-1} = \begin{bmatrix}
A^{-1} & 0 \\ 
-Q & M^{-1}
\end{bmatrix},\]
where $Q$ is arbitrary but $M$ has to be invertible because we require $T$ to be invertible. We now conclude that the solutions $T\in \GL_{n'}(\mathbb{F}_p)$ to the equation (\ref{matrix eqn}) are precisely
\[ T = \widehat{A}^{-1}\begin{bmatrix}
A^{-1} & 0 \\ 
-Q & M^{-1}
\end{bmatrix}^{-1} = \widehat{A}^{-1}\begin{bmatrix} 
A & 0\\
MQA & M
\end{bmatrix},\]
where $Q\in \mathbb{F}_p^{(n'-n)\times n}$ and $M\in \GL_{n'-n}(\mathbb{F}_p)$ are arbitrary.

The above shows that elements of $\res(\mathcal{S}')$ may be parametrized by
\[  \mathbb{F}_p^{(n'-n)\times n}\times \GL_n(\mathbb{F}_p)\times \GL_{n'-n}(\mathbb{F}_p).\]
Let us now compute the structure of $\res(\mathcal{S}')$. For any
\[Q_1,Q_2\in \mathbb{F}_p^{(n'-n)\times n},\, A_1,A_2\in \GL_n(\mathbb{F}_p),\, M_1,M_2\in \GL_{n'-n}(\mathbb{F}_p),\]
letting $Q = M_2^{-1}Q_1 + Q_2A_1^{-1}$, we have
\begin{align*}
&\hspace{5.25mm}
\left(A_1,\begin{bmatrix} 
A_1 & 0\\
M_1Q_1A_1 & M_1
\end{bmatrix}\right)\left(A_2,\begin{bmatrix} 
A_2 & 0\\
M_2Q_2A_2 & M_2
\end{bmatrix}\right)\\
& = \left(A_1A_2,\begin{bmatrix}
A_1A_2 & 0 \\
M_1M_2 QA_1A_2 & M_1M_2
\end{bmatrix} \right).
\end{align*}
This means that in terms of the above parametrization, the multiplication of $\res(\mathcal{S}')$ is given by
\[ (Q_1,A_1,M_1)\cdot (Q_2,A_2,M_2) = (M_2^{-1}Q_1 + Q_2A_1^{-1}, A_1A_2,M_1M_2).\]
From here, we see that the structure of $\res(\mathcal{S}')$ is as claimed.
 \end{proof}

One can now appeal to the construction
in~\cite{special_groups} to obtain that the condition $\Aut(G) =\Aut_c(G)$ can always be realized by a choice of a matrix $(d_{i,(j,k)})$ which has full rank, so that $\Omega_1(G)\leq G'$ as noted in (\ref{iff}). Theorem \ref{thm:main} is now an immediate consequence of (\ref{S}), Theorem \ref{thm:T(G)}, and Proposition \ref{prop:full rank}.

\bibliographystyle{amsalpha}
 
\bibliography{Refs}

\end{document}